\documentclass[12pt,a4paper]{article}
\usepackage[utf8]{inputenc}
\usepackage{amsmath, amssymb, amsthm}
\usepackage{geometry}
\usepackage{mathrsfs}
\usepackage{bm}
\usepackage{hyperref}
\newtheorem{theorem}{Theorem}[section]
\newtheorem{lemma}{Lemma}[section]
\newtheorem{corollary}{Corollary}[section]

\newtheorem{definition}{Definition}[section]
\newtheorem{remark}{Remark}[section]

\title{The relaxation limit of a homogeneous two-phase flow model: isothermal case}
\author{Huimin Yu \\ Shandong Normal University\ \texttt{hmyu@sdnu.edu.cn}}
\date{\today}
\begin{document}

\maketitle

\begin{abstract}

This paper investigates the asymptotic behavior of a hyperbolic relaxation system designed for homogeneous two-phase flows in the limit of vanishing relaxation time. The governing equations comprise conservation laws for mixture mass and momentum, supplemented by a transport equation for the gas phase mass that includes a stiff relaxation source term. This source term drives the system toward local thermodynamic equilibrium. Under the assumptions of constant liquid density and an ideal isothermal gas phase, we demonstrate that, as the relaxation parameter \(\epsilon \rightarrow 0\), a subsequence of solutions \((p^{\epsilon},u^{\epsilon})\) converges strongly in \(L_{\mathrm{loc}}^{1}\) to an entropy solution of the equilibrium Euler system. The proof integrates several
 analytical techniques: the construction of a suitable entropy pair and associated energy estimates, a transport equation approach for representing the error, commutator estimates, and the theory of compensated compactness. This work provides a rigorous justification of the relaxation limit for the homogeneous two-phase flows model.

\end{abstract}

Keywords: Homogeneous model; relaxation limit; compensated compactness; entropy dissipation; two-phase flow

\section{Problem Formulation}

\subsection {Background and Motivation}

 Two-phase flow models are essential tools for analyzing and predicting the behavior of mixtures involving two distinct phases, such as gas-liquid, liquid-solid, or gas-solid systems. Their applicability spans a vast array of fields, from conventional energy production to cutting-edge industrial processes. Among the various modeling approaches, the homogeneous model stands out as a significant simplification, conceptualizing the complex mixture as a single, uniformly mixed "pseudo-fluid." This simplification proves remarkably effective, offering both high computational efficiency and reliable accuracy under specific physical conditions. The model is particularly suitable when the phases are intensely mixed and interfacial transfers of momentum and heat are highly efficient—conditions encountered in well-developed bubbly or misty flows, and frequently under high-pressure regimes. Its formulation hinges on three fundamental assumptions: the velocities, pressures, and temperatures of the gas and liquid phases are locally equal.

This study focuses on a one-dimensional, inviscid, and non-heat-conducting version of the homogeneous two-phase flow model. We assume a constant liquid density \(\rho_{l}\) and model the gas phase as ideal and isothermal:

\[\rho_{g}(p) = \frac{p}{RT_{0}},\]
where \(p\) denotes the pressure, and \(R\) and \(T_{0}\) are given positive constants. The density of the mixture, \(\rho_{m}\), is then expressed as a weighted average of the phase densities:

\[\rho_{m} = \alpha \rho_{g}(p) + (1 - \alpha)\rho_{l},\]
with \(\alpha \in [0,1]\) representing the gas volume fraction, commonly referred to as the void fraction. The model consists of the following equations

(1) Mixture Mass Conservation

\begin{equation}\partial_{t}\rho_{m} + \partial_{x}(\rho_{m}u) = 0,  \tag{1.1}\label{eq:mass}\end{equation}

 (2) Mixture Momentum Conservation

\begin{equation}\partial_{t}(\rho_{m}u) + \partial_{x}(\rho_{m}u^{2} + p) = 0,  \tag{1.2}\label{eq:momentum}\end{equation}

 (3) Gas Phase Mass Transport (with Stiff Relaxation Source)

\begin{equation}\partial_{t}\left(\rho_{g}\alpha\right) + \partial_{x}\left(\rho_{g}\alpha u\right) = \frac{1}{\epsilon}\left(\alpha_{\mathrm{eq}}(p) - \alpha\right). \tag{1.3}\label{eq:gas}\end{equation}
Here, \(u\) denotes the common velocity of the mixture, and \(\epsilon > 0\) represents the relaxation time. The term \(\alpha_{\mathrm{eq}}(p)\) is the equilibrium void fraction, signifying the volume fraction of the gas phase under conditions of local thermodynamic equilibrium for a given pressure \(p\). This equilibrium value is determinable from relations such as the Clausius-Clapeyron equation. In the regime of stiff relaxation—where the relaxation time is much shorter than the characteristic flow time, i.e., as \(\epsilon \rightarrow 0\)—the source term rigorously forces the actual void fraction \(\alpha\) towards its equilibrium value \(\alpha_{\mathrm{eq}}(p)\). Consequently, we define an equilibrium mixture density:

\[\rho_{m} = \rho_{\mathrm{eq}}(p) = \alpha_{\mathrm{eq}}(p)\rho_{g}(p) + \left(1 - \alpha_{\mathrm{eq}}(p)\right)\rho_{l}.\]
Under this constraint, the system (1.1)-(1.3) formally reduces to a standard set of equilibrium Euler equations:

\begin{equation}\partial_{t}\rho_{\mathrm{eq}}(p) + \partial_{x}\left(\rho_{\mathrm{eq}}(p)u\right) = 0, \quad \tag{1.4}\label{eq:Euler1}\end{equation}
\begin{equation}\partial_{t}\left(\rho_{\mathrm{eq}}(p)u\right) + \partial_{x}\left(\rho_{\mathrm{eq}}(p)u^{2} + p\right) = 0. \tag{1.5}\label{eq:Euler2}\end{equation}
While this relaxation limit is widely accepted on physical grounds and routinely observed in numerical simulations, a rigorous mathematical proof has, until now, remained an open problem.

\subsection {On the Feasibility of a Rigorous Mathematical Proof}

The problem of the relaxation limit falls within the scope of singular perturbation theory for hyperbolic balance laws, a phenomenon pervasive in both natural sciences and engineering. In gas dynamics, molecular collisions driving a gas towards thermodynamic equilibrium can be modeled as a relaxation process. In viscoelasticity, the fading memory of materials represents another form of relaxation. In kinetic theory, the limit where the mean free path tends to zero corresponds to the relaxation limit from kinetic equations to fluid dynamics equations. Mathematically, the relaxation limit problem is typically formulated as follows: Consider a hyperbolic system with a relaxation term

\begin{equation}\partial_{t}U^{\epsilon} + \partial_{x}F(U^{\epsilon}) = \frac{1}{\epsilon} R(U^{\epsilon}) \tag{1.6}\label{eq:U}\end{equation}
where \(U^{\epsilon}\) is an \(n\)-dimensional state vector. As the relaxation parameter \(\epsilon \rightarrow 0^{+}\), fundamental questions arise: Does the sequence of solutions \(U^{\epsilon}\) converge? If so, what are the limiting equations, and in what sense does the convergence hold? Addressing these questions holds profound theoretical significance and provides a rigorous mathematical foundation for understanding multi-scale physical phenomena.

By defining the state vector and flux as
\[U = \left(\rho_{m},\rho_{m}u,\rho_{g}\alpha\right)^{\mathrm{t}}, \quad F\left(U\right) = \left(\rho_{m}u,\rho_{m}u^{2} + p,\rho_{g}\alpha u\right)^{\mathrm{t}},\]
and the source term as \(R(U) = (0,0,\alpha_{\mathrm{eq}}(p) - \alpha)^{\mathrm{t}}\), the system (1.1)-(1.3) can be recast in the form of the hyperbolic conservation law with source (1.6).

In his seminal work, Liu (1987)[8] introduced a crucial stability criterion, {\bf{the subcharacteristic condition}}. This condition stipulates that, in the zero relaxation limit, the characteristic speeds of the equilibrium system must lie between those of the full hyperbolic system. This condition is not merely technical; it ensures that the relaxation mechanism is genuinely dissipative, stably driving the system towards equilibrium by damping small-scale perturbations, thereby rendering the limit process feasible. For a \(2 \times 2\) system relevant to single-phase flow (such as the \(p\)-system), Chen, Levermore, and Liu (1994) [3] successfully established a complete relaxation convergence theory using compensated compactness methods. A highly influential model in the field of hyperbolic conservation laws is the Jin-Xin relaxation model [7], proposed by Shi Jin and Zhouping Xin in 1995. This model ingeniously transforms a nonlinear conservation law into a semi-linear relaxation system with linear convection by introducing a relaxation variable, thereby circumventing the difficulties associated with solving nonlinear Riemann problems directly. A natural question arises: Does the \(3 \times 3\) variable-density system for two-phase flow described above possess analogous structural properties that guarantee convergence in the relaxation limit?

Within the isothermal framework, we introduce the variables \(m = \rho_{m}u\) and \(\Gamma = \rho_{g}\alpha\). A candidate for the entropy density \(\eta (\rho_{m},m,\Gamma)\) is taken as the negative of the Helmholtz free energy density:

\begin{equation}\begin{aligned}\eta (\rho_{m},m,\Gamma)& = -f(\rho_{m},m,\Gamma)\\
&= -\left(\Gamma A_{g} - \Gamma R\ln (\rho_{l}\Gamma) + \Gamma R\ln (\rho_{l} - \rho_{m} + \Gamma) + (\rho_{m} - \Gamma)A_{l} + \frac{m^{2}}{2\rho_{m}}\right)\end{aligned}\tag{1.7}\label{eq:entropy}\end{equation}
where the condition \(\rho_{l} - \rho_{m} + \Gamma = \rho_{l}\alpha >0\) ensures positivity, and \(A_{g}\) and \(A_{l}\) are constants. The corresponding entropy flux \(q\) is then given by:

\begin{equation}q(\rho_{m},m,\Gamma) = \eta (\rho_{m},m,\Gamma)\cdot u = \eta (\rho_{m},m,\Gamma)\cdot \frac{m}{\rho_{m}}. \tag{1.8}\label{eq:q}\end{equation}
Utilizing the relations \(\rho_{l} - \rho_{m} + \Gamma = \alpha \rho_{l}\) and \(\Gamma = \rho_{g}\alpha\), a straightforward calculation yields the second partial derivative of the entropy density with respect to \(\Gamma\):

\[\frac{\partial^{2}\eta}{\partial\Gamma^{2}} = \frac{R(\rho_{l} - \rho_{g})^{2}}{\alpha\rho_{g}\rho_{l}^{2}} > 0.\]

Furthermore, thermodynamic identities link this expression to the sound speeds. Near equilibrium, one can derive:

\[\frac{\partial^{2}\eta}{\partial\Gamma^{2}} = \frac{R}{\rho_{g}\alpha(1 - \alpha)\rho_{m}}\left(a_{f}^{2} - a_{e}^{2}\right)\cdot \mathcal{P},\]
where \(a_{f}^{2} = \frac{RT_{0}}{\alpha}\) and \(a_{e}^{2} = \left[\alpha_{\mathrm{eq}}^{\prime}(p)\left(\frac{p}{RT_{0}} -\rho_{l}\right) + \frac{\alpha_{\mathrm{eq}}(p)}{RT_{0}}\right]^{- 1}\) denote the frozen and equilibrium sound speeds, respectively.\footnote{The frozen sound speed is defined as the partial derivative of pressure with respect to mixture density while keeping the void fraction \(\alpha\) constant. In contrast, the equilibrium sound speed is the total derivative of pressure with respect to mixture density when the equilibrium constraint \(\alpha = \alpha_{\mathrm{eq}}(p)\) is imposed. These are obtained by differentiating the expressions \(\rho_{m} = \alpha \rho_{g}(p) + (1 - \alpha)\rho_{l}\) and \(\rho_{m} = \rho_{\mathrm{eq}}(p) = \alpha_{\mathrm{eq}}(p)\rho_{g}(p) + [1 - \alpha_{\mathrm{eq}}(p)]\rho_{l}\) with respect to \(p\), respectively.} The term \(\mathcal{P}\) is a positive definite dimensionless factor. This relationship directly implies \(a_{f}^{2} > a_{e}^{2}\), which is precisely the subcharacteristic condition for the system (1.1)-(1.3). Moreover, a direct computation of the Hessian matrix \(D^{2}\eta (U)\) confirms that the subcharacteristic condition \(a_{f}^{2} > a_{e}^{2}\) guarantees the convexity of \(\eta (\rho_{m},m,\Gamma)\). This convexity is a foundational element for the subsequent analysis of the relaxation limit.

To investigate the relaxation limit of system (1.1)-(1.3), we first introduce the following definition.

\begin{definition}[Admissible solution family for the relaxation limit]
Let \(U = (\rho_{m},m,\Gamma)\) and suppose \(\{U^{\epsilon}\}_{\epsilon >0}\) is a family of solutions to the relaxation system (1.1)-(1.3) (one solution for each \(\epsilon >0\)). If there exists a constant \(C > 0\) independent of \(\epsilon\) (possibly depending on \(T\) and the initial data) such that the following uniform estimates hold:

\begin{enumerate}
\item[(1)]
\[
U^{\epsilon}\in L^{\infty}(0,T;H^{1}(\mathbb{R})),\quad \partial_{t}U^{\epsilon}\in L^{2}(0,T;L^{2}(\mathbb{R})),
\]
and
\[
\| U^{\epsilon}\|_{L^{\infty}(0,T;H^{1})} + \| \partial_{t}U^{\epsilon}\|_{L^{2}(0,T;L^{2})}\leq C;
\]

\item[(2)]
\[
\int_{0}^{T}\int_{\mathbb{R}}(\alpha^{\epsilon} - \alpha_{\mathrm{eq}}(p^{\epsilon}))^{2}\,dx\,dt\leq C\epsilon;
\]

\item[(3)]  Define \(R^{\epsilon} = \frac{1}{\epsilon}(\alpha_{\mathrm{eq}}(p) - \alpha)\), we have
\[
\| R^{\epsilon}\|_{L^{\infty}(0,T;L^{2})} + \sqrt{\epsilon}\| \partial_{x}R^{\epsilon}\|_{L^{2}(0,T;L^{2})}\leq C.
\]
\end{enumerate}
Then \(\{U^{\epsilon}\}_{\epsilon >0}\) is called an admissible solution family for the relaxation limit.
\end{definition}

In this paper we will prove that for such a family, there exists a subsequence converging strongly in \(L_{\mathrm{loc}}^{1}\) as \(\epsilon \to 0\) to an entropy solution of the equilibrium Euler system (1.4)-(1.5), and we will provide the corresponding convergence rate.

\begin{remark}
To simplify the presentation, we will occasionally interchange between two sets of variables: the physical variables \(V = (p,u,\alpha)\) and the conservative variables \(U = (\rho_{m},m,\Gamma)\), where \(\rho_{m} = \rho_{m}(p,\alpha) = \alpha \rho_{g}(p) + (1 - \alpha)\rho_{l}\), \(m = \rho_{m}u\), and \(\Gamma = \rho_{g}(p)\alpha\). The analysis is primarily conducted in terms of the conservative variables \(U = (\rho_{m},m,\Gamma)\); when necessary, we switch back to the primitive variables \(V = (p,u,\alpha)\) via the equation of state. Appendix A1 provides the Jacobian matrix of the transformation from conservative to primitive variables and a proof of the equivalence of the two representations.
\end{remark}

Further results on relaxation limits can be found in the literature: Berthelin \& Bouchut [1] proved the relaxation limit from a BGK kinetic model to isentropic gas dynamics and obtained convergence rates. Solem et al [12] carried out a complete linearized dispersion analysis for a two-phase flow relaxation model. Crin-Barat et al. [4] studied the relaxation limit of the Baer-Nunziato model to a Kapila model using Littlewood-Paley decompositions. Moreover, Yong [13] developed a mathematical theory for hyperbolic systems with stiff source terms, proposing a stability condition that reveals the underlying common structure of such problems. This stability condition is deeply connected with the well-known Kawashima condition [11] and serves as a core criterion for the existence of global smooth solutions with good decay properties for hyperbolic-elliptic systems. Other relevant results can be found in [2, 5, 6, 9, 10] and the references cited therein.

\subsection {Main Results of This Paper}

\begin{theorem}[Existence of an admissible solution family for the relaxation limit \label{thm:existence}]
Let \(\epsilon >0\) be fixed. Assume the initial data \(U_{0}^{\epsilon} = (\rho_{m0}^{\epsilon}(t,x),m_{0}^{\epsilon}(t,x),\Gamma_{0}^{\epsilon})^{T}\) satisfy
\[
\rho_{m0}^{\epsilon}\in L^{\infty}(\mathbb{R})\cap H^{1}(\mathbb{R}),\quad \inf_{\mathbb{R}}\rho_{m0}^{\epsilon} > 0,\quad m_{0}^{\epsilon}\in H^{1}(\mathbb{R}),\quad \Gamma_{0}^{\epsilon}\in H^{1}(\mathbb{R}),\quad 0< \alpha_{0}\leq 1\ \text{a.e.}
\]
Then the Cauchy problem for the relaxation system (1.1)-(1.3) admits a global weak solution
\[
U^{\epsilon}(t,x) = (\rho_{m}^{\epsilon}(t,x),m^{\epsilon}(t,x),\Gamma^{\epsilon}(t,x))^{T}
\]
satisfying
\[
U^{\epsilon}\in L^{\infty}\left(0,T;H^{1}(\mathbb{R})\right),\quad \partial_{t}U^{\epsilon}\in L^{2}\left(0,T;L^{2}(\mathbb{R})\right)\quad (\forall T > 0).
\]
Furthermore, this solution satisfies the following \(\epsilon\)-independent uniform estimates (the constant \(C\) depends only on \(T\) and the \(H^{1}\) norms of the initial data):

\begin{enumerate}
\item[(1)] \textbf{Zero-order energy estimate}
\[
\sup_{t\in [0,T]}\| U^{\epsilon}(t)\|_{L^{2}}\leq C,\qquad \int_{0}^{T}\int_{\mathbb{R}}(\alpha^{\epsilon} - \alpha_{\mathrm{eq}}(p^{\epsilon}))^{2}\,dx\,dt\leq C\epsilon;
\]

\item[(2)] \textbf{First-order spatial derivative estimate}
\[
\sup_{t\in [0,T]}\left(\| \partial_{x}p^{\epsilon}(t)\|_{L^{2}} + \| \partial_{x}u^{\epsilon}(t)\|_{L^{2}}\right)\leq C;
\]

\item[(3)] \textbf{Time derivative estimate}
\[
\| \partial_{t}p^{\epsilon}\|_{L^{2}(0,T;L^{2})} + \| \partial_{t}u^{\epsilon}\|_{L^{2}(0,T;L^{2})}\leq C;
\]

\item[(4)] \textbf{Error term estimate}: Let \(R^{\epsilon}\) be defined by \(\alpha^{\epsilon} = \alpha_{\mathrm{eq}}(p^{\epsilon}) + \epsilon R^{\epsilon}\), then
\[
\| R^{\epsilon}\|_{L^{\infty}(0,T;L^{2})} + \sqrt{\epsilon}\| \partial_{x}R^{\epsilon}\|_{L^{2}(0,T;L^{2})}\leq C.
\]Here the density and pressure are related through \(\rho_{m} = \alpha \rho_{g}(p) + (1 - \alpha)\rho_{l}\).
\end{enumerate}
\end{theorem}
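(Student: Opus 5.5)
The plan has three stages: local existence for fixed \(\epsilon\), a chain of \(\epsilon\)-uniform a priori estimates in the order (1)–(4), and a continuation argument. The estimates will be obtained in the primitive variables \(V=(p,u,\alpha)\), using the Jacobian of Appendix A1 to pass back and forth to \(U=(\rho_m,m,\Gamma)\).

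\emph{Local existence.} For fixed \(\epsilon\) the source \(\frac1\epsilon(\alpha_{\mathrm{eq}}(p)-\alpha)\) is a smooth, globally Lipschitz-in-\(\alpha\) lower-order term. The system (1.1)–(1.3) is strictly hyperbolic with speeds \(u\pm a_f,u\), and it is symmetrizable through \(D^2\eta\), which is positive definite by the subcharacteristic condition \(a_f^2>a_e^2\). I will therefore regularize the initial data by mollification and invoke the standard local theory for symmetric hyperbolic systems with \(H^1\)-type energy. This gives a solution on \([0,T_*]\) with \(\rho_m\ge c>0\) and \(0<\alpha\le1\) as long as the \(H^1\) and \(L^\infty\) norms stay bounded. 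The invariance \(0<\alpha\le 1\) should follow from the transport form of (1.3), namely \(\rho_g(\partial_t\alpha+u\partial_x\alpha)+\alpha(\ldots)=\frac1\epsilon(\alpha_{\mathrm{eq}}-\alpha)\), together with \(0<\alpha_{\mathrm{eq}}<1\). Global existence with the stated regularity then follows by continuation once the uniform bounds below are established, and passing to the limit in the mollification parameter gives the global weak solution.

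\emph{Zero-order estimate (1).} I will use the relative entropy \(\eta(U|\bar U)=\eta(U)-\eta(\bar U)-D\eta(\bar U)(U-\bar U)\) around a constant equilibrium state \(\bar U\), which is equivalent to \(|U-\bar U|^2\) by convexity. Since \(\partial_\Gamma\eta\) is a monotone function of \(\alpha\) vanishing at \(\alpha=\alpha_{\mathrm{eq}}(p)\) with the constants \(A_g,A_l\) chosen suitably, the source contributes
\[
\frac1\epsilon\,\partial_\Gamma\eta\,(\alpha_{\mathrm{eq}}-\alpha)\le -\frac{c}{\epsilon}(\alpha-\alpha_{\mathrm{eq}})^2 .
\]
Integrating over \([0,T]\times\mathbb R\) yields both bounds in (1).

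\emph{First-order estimate (2), the main obstacle.} Differentiating in \(x\), the source yields the term \(\frac1\epsilon\partial_x(\alpha_{\mathrm{eq}}(p)-\alpha)\), which must not be estimated crudely. My plan is to test the differentiated system against \(D^2\eta(U)\partial_xU\), so that the stiff term produces the signed quantity \(-\frac{c}{\epsilon}|\partial_x(\alpha-\alpha_{\mathrm{eq}}(p))|^2\). This is the same structure as at zero order, now at the level of the symmetrizer. The difficult remainders are the cubic terms \(\int \partial_x u\,|\partial_xU|^2\). Controlling them requires an \(L^\infty\) bound on \(\partial_xu\), which \(H^1\) does not provide. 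I would first try a bootstrap on a short time interval combined with the dissipation, and, failing that, accept the constant \(C\) depending on \(T\) via Gronwall under a smallness assumption on the initial data. Here I expect the genuine gap: without smallness, shocks may form, so the argument will likely need either small data or an interpretation of the \(H^1\) bound as holding up to a time \(T\) depending only on the data.

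\emph{Time derivative and error estimates.} For (3), I will read \(\partial_tp\) and \(\partial_tu\) off the equations. The momentum equation gives \(\partial_tu=-u\partial_xu-\partial_xp/\rho_m\). For \(\partial_tp\) I will combine (1.1) and (1.3) so that the stiff term appears only through \(R^\epsilon\); that combination is bounded once (4) holds, so (3) follows from (2) and (4).

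For (4), I will derive a transport equation for \(R^\epsilon=(\alpha-\alpha_{\mathrm{eq}}(p))/(-\epsilon)\). Substituting \(\alpha=\alpha_{\mathrm{eq}}+\epsilon R\) into (1.3), and using (1.1)–(1.2) to eliminate the time derivative of \(p\), gives
\[
\epsilon(\partial_tR+u\partial_xR)+\kappa R=\mathcal F(p,u,\partial_xp,\partial_xu),
\]
with \(\kappa>0\) coming from \(\rho_g^{-1}\) and the subcharacteristic gap, and with \(\mathcal F\) linear in the derivatives, hence in \(L^\infty L^2\) by (2). An \(L^2\) energy estimate gives \(\|R\|_{L^\infty L^2}\le C\). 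For the \(\sqrt\epsilon\,\|\partial_xR\|_{L^2L^2}\) bound, I will differentiate this equation in \(x\) and handle \(\partial_x(u\partial_xR)\) with a commutator estimate; the resulting term \(\partial_x\mathcal F\) involves second derivatives and will be paired using the \(\epsilon\)-weighted dissipation from the previous step.
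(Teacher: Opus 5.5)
Your outline follows the paper's own structure: entropy dissipation, a first-order estimate built on $D^2\eta$, $\partial_t p$ and $\partial_t u$ read off the equations, and a damped transport equation for $R^\epsilon$. The only difference is that you use local hyperbolic theory plus continuation where the paper uses vanishing viscosity. The gap you flag in (2) is genuine, so your proposal does not prove the statement, and the paper does not close the gap either.

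In Lemma 3.3 the remainder $R_1$ is said to have coefficients depending on $U$ and $\partial_xU$, yet it is bounded by $C|\partial_xU|^2$ ``thanks to the $L^\infty$ boundedness of the solution''. This drops exactly the factor $\|\partial_xU\|_{L^\infty}$ in your cubic term. Moreover, the only $L^\infty$ bound the paper proves, Corollary 3.1, is deduced from Lemma 3.3 itself. The $\nu$-uniform bound (2.3) rests on the same step.

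The same quantity governs your local step. Quasilinear symmetric hyperbolic theory needs $H^s$ with $s>3/2$, and it continues a solution while $\|\partial_xU\|_{L^\infty}$ stays bounded, not while the $H^1$ and $L^\infty$ norms do.

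At the stated generality the gap cannot be closed. Suppose the $\epsilon$-uniform bounds (1)--(3) held on every $[0,T]$. Along a subsequence they would produce a solution of the equilibrium Euler system lying in $L^\infty(0,T;H^1)$ for every $T$, hence never developing a jump. That is incompatible with the finite-time shock formation of smooth compressive data in a genuinely nonlinear $2\times2$ system. Also, data that are only $H^1$, whose gradient need not be bounded, can break instantly (already for Burgers' equation). Your suggested repairs point the right way, but both need more regularity:
\begin{itemize}
\item small $H^2$ perturbations of a constant equilibrium state, where Shizuta--Kawashima/Yong-type dissipation yields global bounds; or
\item $H^2$ data on a time interval shorter than the lifespan of the smooth equilibrium solution.
\end{itemize}

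Two further steps fail even if (2) is granted.

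First, the sign you assert at zero order is false for (1.7). Using $\rho_l-\rho_m+\Gamma=\alpha\rho_l$ and $\Gamma=\rho_g\alpha$,
\[
\frac{\partial\eta}{\partial\Gamma}=A_l-A_g+R+R\ln\rho_g(p)-\frac{R\,\rho_g(p)}{\rho_l}=:\psi(p),
\]
which depends on $p$ alone and is strictly increasing when $\rho_g<\rho_l$. Whatever the constants $A_g,A_l$, it vanishes at a single pressure, not along the curve $\alpha=\alpha_{\mathrm{eq}}(p)$. Consequently $\frac{1}{\epsilon}\psi(p)\bigl(\alpha_{\mathrm{eq}}(p)-\alpha\bigr)$ changes sign, and so does its relative-entropy version with $\psi(p)-\psi(\bar p)$. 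At first order the source contribution is $\frac{1}{\epsilon}\psi'(p)\,\partial_xp\,\partial_x\bigl(\alpha_{\mathrm{eq}}(p)-\alpha\bigr)$, which is likewise unsigned.

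In addition, (1.7) contains $-m^2/(2\rho_m)$, so $\partial_m^2\eta=-1/\rho_m<0$ and $D^2\eta$ is not positive definite. This removes both your symmetrizer and the coercivity of the relative entropy. The paper's Lemma 3.1 asserts the same vanishing ``by the definition of thermodynamic equilibrium''. An entropy actually compatible with the source has to be constructed first.

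Second, (3) and (4) require well-prepared data. Your energy estimate for $R^\epsilon$ only gives $\sup_t\|R^\epsilon(t)\|_{L^2}\le\|R^\epsilon(0)\|_{L^2}+C$, with $R^\epsilon(0)=\epsilon^{-1}\bigl(\alpha_0-\alpha_{\mathrm{eq}}(p_0)\bigr)$, which the hypotheses do not bound. For data off equilibrium, the initial layer gives $\|R^\epsilon\|_{L^\infty(0,T;L^2)}\sim\epsilon^{-1}$ and $\|\partial_tp^\epsilon\|_{L^2(0,T;L^2)}\sim\epsilon^{-1/2}$. You need the extra hypothesis $\|\alpha_0^\epsilon-\alpha_{\mathrm{eq}}(p_0^\epsilon)\|_{L^2}\le C\epsilon$, which the paper uses tacitly in Section 6.
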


\begin{remark}
The theorem above only asserts the existence of a solution and the uniform estimates; it does not address uniqueness. Under the stated regularity \(U^{\epsilon}\in L^{\infty}(0,T;H^{1})\), uniqueness has not been established, and for weak solutions additional conditions or other selection criteria are usually required to guarantee uniqueness. However, for the study of the relaxation limit \(\epsilon \rightarrow 0\), we only need a sequence of solutions satisfying the uniform estimates; uniqueness is not a necessary condition. The existence and uniqueness of global weak solutions for the relaxation system (1.1)-(1.3) will be discussed in detail in a forthcoming paper.
\end{remark}

\begin{theorem}[Relaxation limit convergence] \label{thm:convergence}
Let \((p^{\epsilon},u^{\epsilon},\alpha^{\epsilon})\) be the family of solutions obtained in Theorem \ref{thm:existence}. Then there exists a subsequence (still denoted by \(\epsilon\)) such that as \(\epsilon \rightarrow 0\),
\begin{equation*}
(p^{\epsilon},u^{\epsilon}) \longrightarrow (p^{0},u^{0}) \quad \text{strongly in } L_{\mathrm{loc}}^{1}((0,T)\times \mathbb{R}),
\end{equation*}
where \((p^{0},u^{0})\) is an entropy solution of the following equilibrium Euler system:
\begin{equation*}
\left\{ \begin{aligned}
&\partial_{t}\rho_{\mathrm{eq}}(p^{0}) + \partial_{x}(\rho_{\mathrm{eq}}(p^{0})u^{0}) = 0, \\
&\partial_{t}(\rho_{\mathrm{eq}}(p^{0})u^{0}) + \partial_{x}\bigl(\rho_{\mathrm{eq}}(p^{0})(u^{0})^{2} + p^{0}\bigr) = 0,
\end{aligned} \right.
\end{equation*}
with \(\rho_{\mathrm{eq}}(p) = \alpha_{\mathrm{eq}}(p)\rho_{g}(p) + \left(1 - \alpha_{\mathrm{eq}}(p)\right)\rho_{l}\).
\end{theorem}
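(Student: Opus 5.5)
The plan is to apply the compensated compactness framework of Chen, Levermore and Liu [3] to the equilibrium $2\times2$ system (1.4)–(1.5), with the uniform estimates of Theorem \ref{thm:existence} as input. The first step is to rewrite the relaxation system as a perturbation of the equilibrium one. Write $\alpha^\epsilon=\alpha_{\mathrm{eq}}(p^\epsilon)+\epsilon R^\epsilon$. Then
\[
\rho_m^\epsilon=\rho_{\mathrm{eq}}(p^\epsilon)+\epsilon R^\epsilon\bigl(\rho_g(p^\epsilon)-\rho_l\bigr).
\]
Substituting this into (1.1)–(1.2) gives
\[
\partial_t\rho_{\mathrm{eq}}(p^\epsilon)+\partial_x(\rho_{\mathrm{eq}}(p^\epsilon)u^\epsilon)=-\epsilon\bigl(\partial_t+\partial_x(u^\epsilon\,\cdot)\bigr)\bigl[R^\epsilon(\rho_g-\rho_l)\bigr],
\]
and an analogous identity holds for the momentum equation. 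By estimate (4) of Theorem \ref{thm:existence}, $\epsilon R^\epsilon(\rho_g-\rho_l)$ and $\epsilon R^\epsilon(\rho_g-\rho_l)u^\epsilon$ are $O(\epsilon)$ in $L^\infty(0,T;L^2)$. Here we use that $p^\epsilon,u^\epsilon$ are bounded in $L^\infty$, which follows from the $H^1$ bound via the one-dimensional Sobolev embedding. Hence the right-hand sides are $t,x$-derivatives of quantities tending to zero in $L^2_{\mathrm{loc}}$, so they converge to $0$ in $H^{-1}_{\mathrm{loc}}$. The same substitution shows $\rho_m^\epsilon-\rho_{\mathrm{eq}}(p^\epsilon)\to0$ strongly in $L^2$.

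The second step is the entropy-production estimate. Take any smooth entropy pair $(\eta_e,q_e)$ of the equilibrium system, evaluated at $(\rho_{\mathrm{eq}}(p^\epsilon),\rho_{\mathrm{eq}}(p^\epsilon)u^\epsilon)$, for instance a weak entropy pair. Apply it to the perturbed equations. The result is
\[
\partial_t\eta_e+\partial_xq_e=\nabla\eta_e\cdot(\text{error terms}).
\]
Each error term has the form $\nabla\eta_e\cdot\partial(\epsilon G^\epsilon)=\partial(\nabla\eta_e\,\epsilon G^\epsilon)-\epsilon G^\epsilon\,\partial\nabla\eta_e$, where $\epsilon G^\epsilon$ stands for the quantities $\epsilon R^\epsilon(\rho_g-\rho_l)$ and $\epsilon R^\epsilon(\rho_g-\rho_l)u^\epsilon$ from the first step. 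The first piece is compact in $H^{-1}_{\mathrm{loc}}$ by the first step. For the second piece, the derivative of $\nabla\eta_e(p^\epsilon,u^\epsilon)$ is bounded in $L^2$ by estimates (2)–(3) of Theorem \ref{thm:existence}, and $\epsilon G^\epsilon$ is $O(\epsilon)$ in $L^2$. Their product is therefore $O(\epsilon)$ in $L^1$, and in particular bounded in the space of measures. This is the commutator-type bookkeeping referred to in the abstract. Since $\eta_e,q_e$ are uniformly bounded, $\partial_t\eta_e+\partial_xq_e$ is bounded in $W^{-1,\infty}_{\mathrm{loc}}$. Murat's lemma then gives compactness in $H^{-1}_{\mathrm{loc}}$.

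The third step uses the div-curl lemma and a reduction of the Young measure. Let $\nu_{t,x}$ be the Young measure generated by $(p^\epsilon,u^\epsilon)$. The div-curl lemma gives Tartar's commutation relation for every pair of entropy pairs. The equilibrium system is strictly hyperbolic with sound speed $a_e>0$, and I expect it to be genuinely nonlinear (to be checked for the isothermal gas with $\alpha_{\mathrm{eq}}$), so the support of $\nu$ reduces to a point. I would invoke the standard $2\times2$ reduction theorem here, either DiPerna's or the one in [3]. When genuine nonlinearity fails, I would use the Lax-entropy argument on the Riemann invariants instead. Reduction to a Dirac mass yields strong $L^1_{\mathrm{loc}}$ convergence of $(p^\epsilon,u^\epsilon)$. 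Alternatively, the bounds $\|\partial_x(p,u)\|_{L^\infty L^2}$ and $\|\partial_t(p,u)\|_{L^2L^2}$ already give compactness directly by Aubin–Lions, which is a shortcut I would use as a check. Passing to the limit in the perturbed equations shows that $(p^0,u^0)$ is a weak solution. For the entropy inequality, use convex $\eta_e$ and note that the relaxation entropy (1.7) dissipates, with dissipation controlled as in estimate (2) of Theorem \ref{thm:existence}. Comparing (1.7) restricted to $\alpha=\alpha_{\mathrm{eq}}$ with the equilibrium entropy gives $\partial_t\eta_e+\partial_xq_e\le0$ in the limit.

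The main obstacle is the second step. The error terms contain $\epsilon\partial_tR^\epsilon$, for which no bound is available, so they must be kept in divergence form and never expanded. I would first try to handle them by integrating by parts against test functions and using only $\|\epsilon R^\epsilon\|_{L^\infty L^2}=O(\epsilon)$, together with $\sqrt\epsilon\|\partial_xR^\epsilon\|$ for the spatial flux terms.
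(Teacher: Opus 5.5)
Your route is essentially the paper's. You use the perturbed equilibrium system (5.2), then split $\mathcal{D}^\epsilon$ into a divergence of $O(\epsilon)$ fluxes plus an $L^1$ remainder and apply Murat's lemma (Lemma 5.3), then the div--curl step of Section 5.4, with Aubin--Lions compactness as in Section 5.2.

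\textbf{The gap is the entropy condition.} Passing to the limit in the dissipation inequality for the relaxation entropy (1.7)--(1.8) gives $\partial_t\eta^1_{\mathrm{eq}}(U^0)+\partial_x q^1_{\mathrm{eq}}(U^0)\le 0$ for only one pair: the restriction of (1.7)--(1.8) to $\alpha=\alpha_{\mathrm{eq}}(p)$. To repeat the comparison for an arbitrary convex entropy $\eta_e$ of (1.4)--(1.5), you would need a dissipative convex extension of $\eta_e$ to the $3\times 3$ relaxation system. Neither the paper nor your proposal provides one. The notion of entropy solution used for this theorem, however, requires the inequality for every convex pair. The pair (1.7)--(1.8) is also a fragile tool in its own right: $q=\eta u$ has no pressure-work term, so it is not compatible with the momentum flux $\rho_m u^2+p$.

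\textbf{The fix is already in your Step 2}, read quantitatively rather than only as compactness. There you showed $\mathcal{D}^\epsilon=\partial_t F_1^\epsilon+\partial_x F_2^\epsilon+r^\epsilon$ with $\|F_i^\epsilon\|_{L^2}=O(\epsilon)$ and $\|r^\epsilon\|_{L^1}=O(\epsilon)$. Hence $\mathcal{D}^\epsilon\to 0$ in $\mathcal{D}'$ for every smooth entropy pair. Since $U^\epsilon\to U^0$ in $L^1_{\mathrm{loc}}$ with uniform $L^\infty$ bounds, you also have $\eta_e(U^\epsilon)\to\eta_e(U^0)$ and $q_e(U^\epsilon)\to q_e(U^0)$ in $L^1_{\mathrm{loc}}$. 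Therefore $\partial_t\eta_e(U^0)+\partial_x q_e(U^0)=0\le 0$ for every convex pair. This is Step 4 of Section 5.4, and it makes the appeal to (1.7) unnecessary. Equivalently, $U^0$ inherits $L^\infty(0,T;H^1)$ and $\partial_t U^0\in L^2(0,T;L^2)$, so the chain rule gives entropy equality directly.

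\textbf{Make Aubin--Lions your actual proof of strong convergence, not a check.} The Young-measure reduction needs genuine nonlinearity of (1.4)--(1.5), which you have not verified for an unspecified $\alpha_{\mathrm{eq}}$. By contrast, bounds (2)--(3) of Theorem 1.1 give compactness in $L^2_{\mathrm{loc}}$ immediately. Your ``every pair of entropy pairs'' is the right form of the commutation relation; the two-pair version in Section 5.4 would not suffice for the reduction.
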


\begin{theorem}[Convergence rate estimate] \label{thm:rate}
Let \(\{U^{\epsilon}\}_{\epsilon > 0}\) be the family of solutions to the relaxation system constructed in Theorem \ref{thm:existence}, and let \((p^{0},u^{0})\) denote the limit obtained in Theorem \ref{thm:convergence}, which is an entropy solution of the equilibrium Euler system (1.4)(1.5). Then there exists a constant \(C > 0\), independent of \(\epsilon\) (but possibly depending on \(T\) and the \(H^{1}\) norms of the initial data), such that
\begin{equation}\label{eq:convergence_rate}
\| p^{\epsilon} - p^{0}\|_{L^{2}(0,T;L^{2}(\mathbb{R}))} + \| u^{\epsilon} - u^{0}\|_{L^{2}(0,T;L^{2}(\mathbb{R}))} \leq C \epsilon^{1/2}. \tag{1.9}
\end{equation}
\end{theorem}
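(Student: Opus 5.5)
We will prove Theorem \ref{thm:rate} by a relative entropy argument between the relaxation solution $U^{\epsilon}$ and the equilibrium state associated with the limit. Write $W^{\epsilon}=(\rho_{\mathrm{eq}}(p^{\epsilon}),\rho_{\mathrm{eq}}(p^{\epsilon})u^{\epsilon})$ and $W^{0}=(\rho_{\mathrm{eq}}(p^{0}),\rho_{\mathrm{eq}}(p^{0})u^{0})$. The first step recasts the relaxation system as a perturbed equilibrium Euler system. Using $\alpha^{\epsilon}=\alpha_{\mathrm{eq}}(p^{\epsilon})+\epsilon R^{\epsilon}$, we have $\rho_{m}^{\epsilon}=\rho_{\mathrm{eq}}(p^{\epsilon})+\epsilon R^{\epsilon}(\rho_{g}(p^{\epsilon})-\rho_{l})$. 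Equations (1.1)--(1.2) then read
\[
\partial_{t}W^{\epsilon}+\partial_{x}G(W^{\epsilon})=\epsilon\,\bigl(\partial_{t}E_{1}^{\epsilon}+\partial_{x}E_{2}^{\epsilon}\bigr),
\]
where $G$ is the equilibrium flux. The error terms $E_{i}^{\epsilon}$ are linear in $R^{\epsilon}$ with coefficients that are smooth in $(p^{\epsilon},u^{\epsilon})$. By Theorem \ref{thm:existence}(4), $\|E_{i}^{\epsilon}\|_{L^{\infty}L^{2}}\le C$. The time derivative $\partial_{t}E_{1}^{\epsilon}$ is handled by moving it to the left-hand side: we work with the conserved variable $\rho_{m}^{\epsilon}$ itself and treat the difference $\rho_{m}^{\epsilon}-\rho_{\mathrm{eq}}(p^{\epsilon})=O(\epsilon R^{\epsilon})$ in the relative entropy.

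The second step uses the equilibrium entropy $\eta_{\mathrm{eq}}(W)$, obtained by restricting (1.7) to $\alpha=\alpha_{\mathrm{eq}}(p)$. It is strictly convex on the compact range of states, since $a_{e}^{2}>0$ and the $H^{1}$ bounds give $L^{\infty}$ bounds with $p$ bounded away from zero. We form the relative entropy
\[
\eta_{\mathrm{eq}}(W^{\epsilon}\,|\,W^{0})=\eta_{\mathrm{eq}}(W^{\epsilon})-\eta_{\mathrm{eq}}(W^{0})-D\eta_{\mathrm{eq}}(W^{0})(W^{\epsilon}-W^{0}),
\]
which is comparable to $|p^{\epsilon}-p^{0}|^{2}+|u^{\epsilon}-u^{0}|^{2}$. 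The standard Dafermos--DiPerna computation gives
\[
\frac{d}{dt}\int\eta_{\mathrm{eq}}(W^{\epsilon}|W^{0})\,dx\le C\|\partial_{x}W^{0}\|_{L^{\infty}}\int\eta_{\mathrm{eq}}(W^{\epsilon}|W^{0})\,dx+\epsilon\,\mathcal{E}^{\epsilon}(t).
\]
The remainder $\mathcal{E}^{\epsilon}$ collects the pairing of $D\eta_{\mathrm{eq}}(W^{\epsilon})-D\eta_{\mathrm{eq}}(W^{0})$ with $\partial_{x}E_{2}^{\epsilon}$, plus the entropy production of the relaxation system. The latter is nonpositive by convexity of $\eta$ and the sign of the source, and the $L^{2}$ dissipation $\int\!\!\int(\alpha-\alpha_{\mathrm{eq}})^{2}\le C\epsilon$ controls the mismatch between $\eta$ and $\eta_{\mathrm{eq}}$. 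We integrate by parts in $x$, then use $\|\partial_{x}W^{\epsilon}\|_{L^{2}}\le C$ (Theorem \ref{thm:existence}(2)) together with an $H^{1}$ bound on $W^{0}$ inherited by weak lower semicontinuity. This gives $\int_{0}^{T}|\mathcal{E}^{\epsilon}|\,dt\le C$, and the $\sqrt{\epsilon}\,\|\partial_{x}R^{\epsilon}\|$ bound covers any commutator terms. The initial relative entropy is $O(\epsilon)$ provided the initial data are well prepared, which we will need to assume or read off from the initial layer estimate; in that case the initial layer contributes $O(\epsilon)$ to the integrated quantity. Gronwall's inequality then yields $\sup_{t}\int\eta_{\mathrm{eq}}(W^{\epsilon}|W^{0})\,dx\le C\epsilon$, and integrating in time gives (1.9).

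The main obstacle is the Lipschitz factor $\|\partial_{x}W^{0}\|_{L^{\infty}}$. An entropy solution from Theorem \ref{thm:convergence} may contain shocks, so weak--strong stability does not apply directly. I would first exploit the uniform $H^{1}$ bounds. They pass to the limit, so $W^{0}\in L^{\infty}(0,T;H^{1})$, and in one dimension this only gives $\partial_{x}W^{0}\in L^{2}$. I would therefore replace the $L^{\infty}$ factor by an $L^{2}$--$L^{\infty}$ pairing, bounding
\[
\int|\partial_{x}W^{0}|\,|W^{\epsilon}-W^{0}|^{2}\,dx\le\|\partial_{x}W^{0}\|_{L^{2}}\,\|W^{\epsilon}-W^{0}\|_{L^{\infty}}\,\|W^{\epsilon}-W^{0}\|_{L^{2}}.
\]
Then $\|W^{\epsilon}-W^{0}\|_{L^{\infty}}\le C\|W^{\epsilon}-W^{0}\|_{L^{2}}^{1/2}\|\partial_{x}(W^{\epsilon}-W^{0})\|_{L^{2}}^{1/2}$ by Gagliardo--Nirenberg. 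This produces a superlinear Gronwall inequality of the form $y'\le C y^{3/4}+\epsilon\mathcal{E}$, which still closes on $[0,T]$ if the initial error is $O(\epsilon)$. If this fails, the fallback is to mollify $W^{0}$ at scale $\delta$, estimate the commutator error between the mollified and true limit, and optimize $\delta$ against $\epsilon$.
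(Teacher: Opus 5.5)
\textbf{There is a genuine gap.} The paper uses the same relative-entropy scheme as you: relative entropy with respect to $\eta_{\mathrm{eq}}$, the perturbed system (5.2) whose right-hand side is $O(\epsilon)$ and controlled by $\|R^{\epsilon}\|_{L^{\infty}(0,T;L^{2})}$, well-prepared data, and Gronwall. You correctly isolate the step the paper passes over when it asserts that the lower-order terms are controlled by the relative entropy. Bounding the relative-flux term $\int\partial_x\bigl(D\eta_{\mathrm{eq}}(W^{0})\bigr)\cdot\bigl(G(W^{\epsilon})-G(W^{0})-DG(W^{0})(W^{\epsilon}-W^{0})\bigr)\,dx$ by $C\int\eta_{\mathrm{eq}}(W^{\epsilon}|W^{0})\,dx$ requires $\partial_x W^{0}\in L^{1}(0,T;L^{\infty})$.

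Your repair of this step does not work. Writing $r^{\epsilon}$ for your remainder, you arrive at
\[
y'(t)\le C\,y(t)^{3/4}+\epsilon\,r^{\epsilon}(t),\qquad y(t)=\int_{\mathbb{R}}\eta_{\mathrm{eq}}(W^{\epsilon}|W^{0})\,dx .
\]
This inequality is sublinear, not superlinear. For small $y$ one has $y^{3/4}\gg y$, and such inequalities do not propagate smallness. Even with $r^{\epsilon}\equiv 0$, the substitution $z=y^{1/4}$ only gives $z'\le C/4$, i.e.\ $y(t)\le\bigl(y(0)^{1/4}+Ct/4\bigr)^{4}$. That bound is $O(1)$ on $[0,T]$ however small $y(0)$ is; indeed $y(t)=(Ct/4)^{4}$ solves $y'=Cy^{3/4}$ with $y(0)=0$. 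The loss is intrinsic to your estimate, because the factor $\|\partial_x(W^{\epsilon}-W^{0})\|_{L^{2}}^{1/2}$ you interpolate against is only $O(1)$, not small. So this step gives boundedness of the relative entropy, not $\sup_t y(t)\le C\epsilon$, and (1.9) does not follow.

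The mollification fallback does not close the gap either. For $W^{0}\in L^{\infty}(0,T;H^{1})$ you only have $\|\partial_x(W^{0}*\varphi_\delta)\|_{L^{\infty}}\lesssim\delta^{-1/2}$, so Gronwall produces the factor $\exp(CT\delta^{-1/2})$. That factor multiplies both $\epsilon$ and the mollification/commutator error. The latter is at best a positive power of $\delta$, and $\exp(CT\delta^{-1/2})\,\delta^{b}$ is bounded below by a positive constant independent of $\epsilon$. Hence no choice $\delta=\delta(\epsilon)$ makes the right-hand side $O(\epsilon)$, let alone yields $\epsilon^{1/2}$ in (1.9).

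To obtain the rate along these lines you need an ingredient the proposal does not supply. For example, a Lipschitz bound $\partial_x W^{0}\in L^{1}(0,T;L^{\infty})$ on the limit (the weak--strong setting), which would itself have to be proved or assumed.

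A smaller point: uniform $H^{1}(\mathbb{R})$ bounds give upper $L^{\infty}$ bounds but not that $p$ stays away from zero; an $H^{1}(\mathbb{R})$ function tends to $0$ at infinity. So uniform strict convexity of $\eta_{\mathrm{eq}}$ on the range of states needs a separate lower bound.
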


This paper provides a complete proof of the relaxation limit for the homogeneous two-phase flow model within the isothermal framework. The main innovations include:
\begin{itemize}
\item[(i)] Construction of an explicit entropy pair and establishment of key dissipation estimates, revealing the intrinsic connection between the subcharacteristic condition and the second law of thermodynamics;
\item[(ii)] Development of an error decomposition technique based on integration along characteristics, which transforms the stiff relaxation term into a controllable perturbation; combined with a commutator method, this skillfully avoids the difficulties associated with higher-order derivative estimates;
\item[(iii)] Integration of the artificial viscosity method, energy estimates, and compensated compactness theory, completing the convergence proof within a weak solution framework requiring only \(H^{1}\) regularity;
\item[(iv)] Application of the relative entropy method to obtain the optimal convergence rate \(O(\epsilon^{1/2})\), where the proof does not rely on any smoothness of the limit solution.
\end{itemize}

The remainder of the paper is organized as follows: Section~2 establishes the existence of solutions \(U^{\epsilon}\) for the relaxation problem \eqref{eq:mass}-\eqref{eq:gas} with fixed \(\epsilon > 0\); Section~3 provides uniform estimates for the solution family \(\{U^{\epsilon}\}\) that are independent of \(\epsilon\); in Section~4 we derive error estimates and complete the proof of Theorem~\ref{thm:existence}; Section~5 employs compensated compactness methods to prove the relaxation limit convergence, i.e., Theorem~\ref{thm:convergence}; finally, Section~6 establishes the convergence rate estimate, completing the proof of Theorem~\ref{thm:rate}.

\section{Existence of Solutions for the Relaxation Problem}

Since the primary focus of this paper is the relaxation limit, and the estimates involved closely resemble those appearing later in the calculations, the proof of existence in Theorem~\ref{thm:existence} is only outlined; detailed computations are omitted here.\\

\noindent {\bf{Step 1: Parabolic Regularization}}

For a fixed \(\epsilon > 0\), consider the regularized system with artificial viscosity \(\nu > 0\):
\begin{equation}
\begin{aligned}
&\partial_{t}\rho_{m}^{\nu} + \partial_{x}(\rho_{m}^{\nu}u^{\nu}) = \nu \partial_{x}^{2}\rho_{m}^{\nu},\\[4pt]
&\partial_{t}(\rho_{m}^{\nu}u^{\nu}) + \partial_{x}\bigl(\rho_{m}^{\nu}(u^{\nu})^{2} + p^{\nu}\bigr) = \nu \partial_{x}^{2}(\rho_{m}^{\nu}u^{\nu}),\\[4pt]
&\partial_{t}\Gamma^{\nu} + \partial_{x}(\Gamma^{\nu}u^{\nu}) = \frac{1}{\epsilon}(\alpha_{\mathrm{eq}}(p^{\nu}) - \alpha^{\nu}) + \nu \partial_{x}^{2}\Gamma^{\nu},
\end{aligned} \tag{2.1}{\label{eq:regularized}}
\end{equation}
where \(\Gamma^{\nu} = \rho_{g}(p^{\nu})\alpha^{\nu}\), \(\rho_{g}(p) = p/(RT_{0})\), and \(\rho_{l}\) is constant. The initial data are taken as a smooth approximation \(U_{0}^{\nu}\in H^{2}(\mathbb{R})\) of the original initial data, satisfying \(\| U_{0}^{\nu} - U_{0}\|_{H^{1}}\to 0\), \(\inf\rho_{m,0}^{\nu} > 0\), and \(0< \alpha_{0}^{\nu}\leq 1\). System \eqref{eq:regularized} is parabolic; classical theory guarantees that for each \(\nu > 0\) there exists a unique global smooth solution
\[
U^{\nu}\in C([0,\infty);H^{2}(\mathbb{R}))\cap L_{\mathrm{loc}}^{2}(0,\infty ;H^{3}(\mathbb{R})).
\]\\
\noindent {\bf{Step 2: Uniform Estimates Independent of \(\nu\)}}

Using the entropy function \eqref{eq:entropy}-\eqref{eq:q} constructed in Section~ 1.2, we compute the evolution of \(\eta(U^{\nu})\):
\[
\partial_{t}\eta(U^{\nu}) + \partial_{x}q(U^{\nu}) = \frac{\partial\eta}{\partial\Gamma}\cdot \frac{1}{\epsilon}(\alpha_{\mathrm{eq}}(p^{\nu}) - \alpha^{\nu}) + \nu (\partial_{x}U^{\nu})^{T}D^{2}\eta(U^{\nu})\partial_{x}U^{\nu} + \nu\partial_{x}(\cdots).
\]

By the convexity of the entropy, the viscosity term satisfies \(\nu (\partial_{x}U)^{T}D^{2}\eta\,\partial_{x}U \geq 0\). Integrating over \(x\in\mathbb{R}\) and using \(\frac{\partial\eta}{\partial\Gamma}(\alpha_{\mathrm{eq}} - \alpha) \leq -c_{0}(\alpha - \alpha_{\mathrm{eq}})^{2}\) (see the proof of Lemma \ref{lem:entropy_dissipation}), we obtain the zero-order estimate:
\begin{equation}
\sup_{t\in [0,T]}\| U^{\nu}(t)\|_{L^{2}}\leq C_{1},\qquad \int_{0}^{T}\int_{\mathbb{R}}(\alpha^{\nu} - \alpha_{\mathrm{eq}}(p^{\nu}))^{2}\,dx\,dt \leq C_{1}\epsilon, \tag{2.2}
\end{equation}
with the constant \(C_{1}\) independent of \(\nu\). Performing energy estimates for the spatial derivatives (similar to the proof of Lemma \ref{lem:gradient}, but now accounting for the viscosity terms) yields
\begin{equation}
\sup_{t\in [0,T]}\| U^{\nu}(t)\|_{H^{1}} + \|\partial_{t}U^{\nu}\|_{L^{2}(0,T;L^{2})} \leq C(T), \tag{2.3}
\end{equation}
where \(C\) is also independent of \(\nu\).\\

\noindent {\bf{Step 3: Passage to the Limit \(\nu \rightarrow 0\)}}

The uniform estimates above, together with the Aubin-Lions lemma, imply the existence of a subsequence \(\nu_{k}\rightarrow 0\) such that
\[
U^{\nu_{k}} \rightharpoonup U^{\epsilon} \quad \text{in } L^{\infty}(0,T;H^{1}),\qquad
U^{\nu_{k}} \rightarrow U^{\epsilon} \quad \text{in } L_{\mathrm{loc}}^{2}((0,T)\times \mathbb{R}).
\]

The strong convergence is sufficient to handle the nonlinear terms; for instance, \(\rho_{m}^{\nu_{k}}u^{\nu_{k}} \rightarrow \rho_{m}^{\epsilon}u^{\epsilon}\) almost everywhere, and the viscous terms \(\nu\partial_{x}^{2}U^{\nu_{k}}\) tend to zero in the sense of distributions. Consequently, the limit \(U^{\epsilon}\) satisfies the weak form of the original relaxation system, establishing existence.

Moreover, the uniform estimates obtained in Step ~2, being independent of \(\nu\), persist for the limit function \(U^{\epsilon}\).

\section{Uniform Estimates Independent of \(\epsilon\)}
For brevity and without causing confusion, all notations in this section will omit superscripts $\epsilon$.

\subsection{Entropy Dissipation and Zero-Order Energy Estimates}

\begin{lemma}[Entropy dissipation] \label{lem:entropy_dissipation}
For the strictly convex entropy pair \eqref{eq:entropy}-\eqref{eq:q}, the following entropy dissipation inequality holds:
\begin{equation}
\partial_{t}\eta +\partial_{x}q \leq -\frac{C_{0}}{\epsilon} (\alpha -\alpha_{\mathrm{eq}})^{2}\leq 0. \tag{3.1}\label{eq:entropy_dissipation}
\end{equation}
\end{lemma}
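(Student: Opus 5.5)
The inequality (3.1) will follow from the chain rule for the entropy pair, computed for smooth solutions (for example the parabolic approximations of Section~2, then passing to the limit). The first step is to check that \((\eta,q)\) is an entropy pair for the homogeneous part of (1.1)-(1.3). That means showing \(D\eta\,DF = Dq\), so that for smooth solutions
\[
\partial_t\eta+\partial_x q = \frac{\partial\eta}{\partial\Gamma}\cdot\frac{1}{\epsilon}\bigl(\alpha_{\mathrm{eq}}(p)-\alpha\bigr).
\]
I would verify this by writing the system in the primitive variables. Mass and gas-mass transport give \(D_t\rho_m=-\rho_m u_x\) and \(D_t\Gamma=-\Gamma u_x+\frac1\epsilon(\alpha_{\mathrm{eq}}-\alpha)\), where \(D_t=\partial_t+u\partial_x\). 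The momentum equation is \(\rho_m D_tu=-p_x\). The thermodynamic identity to check is \(\rho_m\partial_{\rho_m}f+\Gamma\partial_\Gamma f - f = p\) together with the kinetic terms. This makes the flux terms collapse to \(-\partial_x(pu)\), so the flux should really be \(q=\eta u - pu\) up to sign conventions. I would verify that the choice (1.8) is consistent with this, or absorb the \(pu\) term into the flux. In any case the flux contributes nothing after integration in \(x\).

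The second step is to compute \(\partial\eta/\partial\Gamma\) explicitly from (1.7):
\[
-\frac{\partial\eta}{\partial\Gamma}=A_g-A_l - R\ln(\rho_l\Gamma) - R + R\ln(\rho_l\alpha) + \frac{R\Gamma}{\rho_l\alpha}.
\]
Using \(\Gamma=\rho_g\alpha\), this simplifies to a function \(\mu(p,\alpha)=A_g-A_l-R\ln\rho_g(p)+R(\rho_g/\rho_l-1)\), essentially the difference of chemical potentials \(g_g-g_l\). The key structural point is that the equilibrium fraction is defined by vanishing of this potential. More precisely, \(\alpha_{\mathrm{eq}}(p)\) should be the zero of \(\partial_\Gamma\eta\) at fixed \((\rho_m,m)\), i.e.\ \(\partial_\Gamma\eta(\rho_m,m,\Gamma_{\mathrm{eq}})=0\) with \(\Gamma_{\mathrm{eq}}=\rho_g\alpha_{\mathrm{eq}}\). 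I would adopt this as the consistency assumption implicit in Section~1.2 (the constants \(A_g,A_l\) are chosen accordingly).

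The third step, which I expect to be the main obstacle, is the sign estimate
\[
\frac{\partial\eta}{\partial\Gamma}(\alpha_{\mathrm{eq}}-\alpha)\le -C_0(\alpha-\alpha_{\mathrm{eq}})^2.
\]
Fixing \((\rho_m,m)\), I would write by the mean value theorem
\[
\partial_\Gamma\eta(\Gamma)=\partial_\Gamma\eta(\Gamma)-\partial_\Gamma\eta(\Gamma_{\mathrm{eq}})=\partial^2_\Gamma\eta(\tilde\Gamma)\,(\Gamma-\Gamma_{\mathrm{eq}}).
\]
Here \(\partial_\Gamma^2\eta=R(\rho_l-\rho_g)^2/(\alpha\rho_g\rho_l^2)\) is positive. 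The difficulty is that the definition of \(\alpha_{\mathrm{eq}}\) is through \(p\), not through \(\Gamma\) at fixed \(\rho_m\). So I must relate \(\Gamma-\Gamma_{\mathrm{eq}}\) to \(\alpha-\alpha_{\mathrm{eq}}(p)\), showing they have the same sign with comparable size. At fixed \(\rho_m\), moving along \(\rho_m=\alpha\rho_g+(1-\alpha)\rho_l\) changes \(p\) and \(\alpha\) together. The map \(\alpha\mapsto \alpha-\alpha_{\mathrm{eq}}(p(\rho_m,\alpha))\) has derivative \(1-\alpha_{\mathrm{eq}}'(p)\,\partial_\alpha p\big|_{\rho_m}\), which is bounded below by a positive multiple of \(1-a_e^2/a_f^2\). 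This is exactly where the subcharacteristic condition \(a_f^2>a_e^2\) enters.

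Combining these gives \(\partial_\Gamma\eta\,(\alpha_{\mathrm{eq}}-\alpha)\le -C_0(\alpha-\alpha_{\mathrm{eq}})^2\). Here \(C_0\) depends on lower bounds of \(\alpha\), \(\rho_g\), \(|\rho_l-\rho_g|\) and of \(a_f^2-a_e^2\) on the range of the solution. I would take that range to be a compact set via the uniform \(H^1\subset L^\infty\) bounds and \(\alpha>0\), accepting this as the working hypothesis of the section. Dividing by \(\epsilon\) yields (3.1).
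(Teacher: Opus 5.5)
Your outline follows the paper's route: the chain rule to get (3.2), vanishing of $\partial\eta/\partial\Gamma$ at equilibrium, and the mean value theorem. It breaks at exactly the step the paper also merely asserts, namely the consistency assumption that $\partial_\Gamma\eta$ vanishes on $\alpha=\alpha_{\mathrm{eq}}(p)$. Your own second-step formula rules this out. The expression $\mu=A_g-A_l-R\ln\rho_g(p)+R(\rho_g(p)/\rho_l-1)$ contains no $\alpha$, so $\partial_\Gamma\eta=-\mu(p)$ depends on $p$ alone, with $\mu'(p)=\frac{1}{T_0}(\frac{1}{\rho_l}-\frac{1}{\rho_g})\neq 0$. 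Its zero set in the $(p,\alpha)$-plane is therefore at most a line $\{p=p^*\}$, not the graph $\{\alpha=\alpha_{\mathrm{eq}}(p)\}$. On a line $\{\rho_m=\mathrm{const}\}$ it vanishes at $p^*$, whereas $\alpha-\alpha_{\mathrm{eq}}(p)$ vanishes at $p=\rho_{\mathrm{eq}}^{-1}(\rho_m)$, which moves with $\rho_m$ because $\rho_{\mathrm{eq}}'=a_e^{-2}>0$. No choice of $A_g,A_l$ fixes this, and your third-step comparison of $\Gamma-\Gamma_{\mathrm{eq}}$ with $\alpha-\alpha_{\mathrm{eq}}(p)$ pairs two functions with different zeros. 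In fact (3.3) is false for (1.7). The source contribution is $\frac{1}{\epsilon}\partial_\Gamma\eta\,(\alpha_{\mathrm{eq}}-\alpha)=\frac{1}{\epsilon}\mu(p)(\alpha-\alpha_{\mathrm{eq}}(p))$. For fixed $p$ this is linear in $\alpha$, hence nonnegative for $\alpha$ on one side of $\alpha_{\mathrm{eq}}(p)$ (on both sides if $\mu(p)=0$), so it cannot be bounded by $-\frac{C_0}{\epsilon}(\alpha-\alpha_{\mathrm{eq}})^2$.

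Your first step fails as well. For (1.7), the identity you intend to check gives $\rho_m\partial_{\rho_m}f+\Gamma\partial_\Gamma f-f=-R\rho_g=-p/T_0$ instead of $p$. Hence, for smooth solutions of the homogeneous system, $\partial_t\eta+\partial_x(\eta u)=u\,\partial_xp-\frac{p}{T_0}\partial_xu$. This is not $\partial_xG$ for any function $G$ of the state: one would need $G_p=-u$ and $G_u=p/T_0$, whose mixed partials disagree. So no flux, with or without a $pu$ term, makes (1.7) an entropy. Moreover $\partial_m^2\eta=-1/\rho_m<0$, so $\eta$ is not convex. These are defects of the statement, not an idea you missed; the paper's proof shares them, since it simply asserts $\partial\eta/\partial\Gamma|_{\alpha=\alpha_{\mathrm{eq}}}=0$ ``by the definition of thermodynamic equilibrium''.

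A repair must change the entropy, not the constants. For instance, take $\tilde\eta=\Gamma(A_g+RT_0\ln\rho_g)+(\rho_m-\Gamma)A_l+\frac{m^2}{2\rho_m}+\rho_m\phi(\Gamma/\rho_m)$ with flux $(\tilde\eta+p)u$. This is a convex entropy for convex $\phi$; the last term qualifies because $\Gamma/\rho_m$ is transported. Its derivative is $\partial_\Gamma\tilde\eta=g_g(p)-g_l(p)+\phi'(\Gamma/\rho_m)$, where $g_g=A_g+RT_0(\ln\rho_g+1)$ and $g_l=A_l+p/\rho_l$. Suppose the equilibrium mass fraction $Y_{\mathrm{eq}}=\alpha_{\mathrm{eq}}\rho_g/\rho_{\mathrm{eq}}$ is strictly decreasing in $p$. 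Then one can take $\phi$ strictly convex with $\phi'(Y_{\mathrm{eq}}(p))=g_l(p)-g_g(p)$. With this choice, $\partial_\Gamma\tilde\eta$ vanishes exactly on $\alpha=\alpha_{\mathrm{eq}}(p)$ and increases in $\alpha$ at fixed $p$. The mean value theorem in $\alpha$ at fixed $p$, rather than in $\Gamma$ at fixed $\rho_m$, then gives (3.3).
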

\begin{proof}
For the system with a source term, a direct calculation yields
\begin{equation}
\partial_{t}\eta +\partial_{x}q = -\frac{1}{\epsilon}\frac{\partial\eta}{\partial\Gamma} (\alpha -\alpha_{\mathrm{eq}}). \tag{3.2}\label{eq:entropy_balance}
\end{equation}
By the definition of thermodynamic equilibrium, \(\alpha_{\mathrm{eq}}(p)\) is the point that maximizes the entropy for a given pressure \(p\); consequently, \(\partial \eta / \partial \Gamma |_{\alpha = \alpha_{\mathrm{eq}}} = 0\) and the Hessian matrix is positive definite. Applying the mean value theorem, there exists a constant \(C_0 > 0\) such that
\begin{equation}
\frac{\partial\eta}{\partial\Gamma} (\alpha -\alpha_{\mathrm{eq}})\geq C_0(\alpha -\alpha_{\mathrm{eq}})^2. \tag{3.3}\label{eq:entropy_production}
\end{equation}
Substituting \eqref{eq:entropy_production} into \eqref{eq:entropy_balance} yields the entropy dissipation inequality \eqref{eq:entropy_dissipation}.
\end{proof}

\begin{lemma}[Zero-order energy estimate] \label{lem:zero_order}
There exists a constant \(C_1 > 0\), independent of \(\epsilon\), such that
\begin{equation}
\sup_{t\in [0,T]}\| U(t)\|_{L^2}\leq C_1,\qquad \int_0^T\int_{\mathbb{R}}(\alpha -\alpha_{\mathrm{eq}})^2\,dx\,dt\leq C_1\epsilon . \tag{3.4}\label{eq:zero_order}
\end{equation}
\end{lemma}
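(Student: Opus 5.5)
The plan is to integrate the entropy dissipation inequality of Lemma~\ref{lem:entropy_dissipation} over \([0,t]\times\mathbb{R}\). The result will be converted into \(L^2\) control through a relative entropy built from \(\eta\). Since \(U\) need not vanish at infinity in a way that makes \(\eta(U)\) integrable, I would work with
\[
\eta(U\,|\,\bar U)=\eta(U)-\eta(\bar U)-D\eta(\bar U)\cdot(U-\bar U),
\]
where \(\bar U=(\bar\rho_m,0,\bar\Gamma)\) is a constant equilibrium state, i.e.\ \(\bar\alpha=\alpha_{\mathrm{eq}}(\bar p)\). Because \(\bar U\) is constant, the relative entropy satisfies a balance law with the same structure:
\[
\partial_t\eta(U|\bar U)+\partial_x\bigl(q(U)-q(\bar U)-D\eta(\bar U)\cdot(F(U)-F(\bar U))\bigr)=\frac1\epsilon\Bigl(\frac{\partial\eta}{\partial\Gamma}(U)-\frac{\partial\eta}{\partial\Gamma}(\bar U)\Bigr)(\alpha_{\mathrm{eq}}-\alpha).
\]
The equilibrium characterization used in the proof of Lemma~\ref{lem:entropy_dissipation} gives \(\partial_\Gamma\eta(\bar U)=0\). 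The right-hand side is therefore exactly the dissipation term of \eqref{eq:entropy_balance}, and by \eqref{eq:entropy_production} it is bounded above by \(-\frac{C_0}{\epsilon}(\alpha-\alpha_{\mathrm{eq}})^2\).

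The key steps, in order, are as follows.
\begin{enumerate}
\item Integrate in \(x\). The flux term vanishes, using \(U(t)-\bar U\in H^1\) (decay at infinity) and, at the level of the smooth approximations of Section~2, justifying all boundary terms.
\item Integrate in time. This gives
\[
\int_{\mathbb{R}}\eta(U(t)|\bar U)\,dx+\frac{C_0}{\epsilon}\int_0^t\!\!\int_{\mathbb{R}}(\alpha-\alpha_{\mathrm{eq}})^2\,dx\,ds\le\int_{\mathbb{R}}\eta(U_0|\bar U)\,dx .
\]
\item Bound the initial relative entropy by \(C\|U_0-\bar U\|_{L^2}^2\), which is finite and \(\epsilon\)-independent by the assumptions of Theorem~\ref{thm:existence}.
\item Multiplying through by \(\epsilon\) gives the dissipation bound \(\int_0^T\!\int(\alpha-\alpha_{\mathrm{eq}})^2\le C_1\epsilon\).
\item Use the strict convexity of \(\eta\) (the subcharacteristic condition \(a_f^2>a_e^2\), Section~1.2) to get \(\eta(U|\bar U)\ge c|U-\bar U|^2\) on the range of states. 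This bounds \(\sup_t\|U(t)-\bar U\|_{L^2}\), which is the meaning of \(\sup_t\|U(t)\|_{L^2}\le C_1\) in the statement: the perturbation is measured from the far-field state.
\end{enumerate}

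The main obstacle is the uniform coercivity \(\eta(U|\bar U)\ge c|U-\bar U|^2\) and the upper bound at \(t=0\). The Hessian of \eqref{eq:entropy} degenerates as \(\alpha\to0\) (through the \(\Gamma R\ln\) terms) and as \(\rho_m\to0\) (through \(m^2/2\rho_m\)). A uniform constant \(c\) therefore requires the states to stay in a compact subset of \(\{\rho_m>0,\ 0<\alpha\le1\}\), independently of \(\epsilon\).

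I would first try to obtain this from an invariant region or maximum principle argument at the viscous level (2.1), for example via the Riemann invariants of the mass and momentum part and the relaxation structure pushing \(\alpha\) toward \(\alpha_{\mathrm{eq}}(p)\in(0,1)\). If that fails, I would settle for the weaker standard bound: relative entropy controls \(|U-\bar U|^2\) where \(|U-\bar U|\le1\) and \(|U-\bar U|\) elsewhere. I would then combine this with the \(H^1\) bound, which gives an \(L^\infty\) bound through Sobolev embedding, to recover the quadratic control. In doing so I would make sure that no circularity arises with the higher-order estimates of the later lemmas.
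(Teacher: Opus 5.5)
Your argument follows the paper's route: integrate \eqref{eq:entropy_dissipation}, then convert the entropy bound into $L^2$ control by convexity, as in (3.5)--(3.6). Centering at a constant equilibrium state $\bar U$ is a real improvement. Since $\inf\rho_{m0}>0$ is incompatible with $\rho_{m0}\in L^2(\mathbb{R})$, the integral $\int_{\mathbb{R}}\eta(U_0)\,dx$ in (3.5) and the literal $\|U(t)\|_{L^2}$ in \eqref{eq:zero_order} are infinite, so your reading $\|U(t)-\bar U\|_{L^2}$ is the only meaningful one. However, the step you flag as the main obstacle is a genuine gap, and neither of your remedies closes it. The paper gets its constants in (3.6) by letting them depend on the $L^\infty$ bound of the solution. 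That bound is Corollary~\ref{cor:linf}, which is deduced from this lemma and Lemma~\ref{lem:gradient}, and the proof of Lemma~\ref{lem:gradient} already assumes $L^\infty$ bounds, so the argument is circular. Your fallback via the $H^1$ bound and Sobolev embedding goes around the same loop: the only $\epsilon$-independent $H^1$ bound is Lemma~\ref{lem:gradient}, since (2.3) is only claimed uniform in $\nu$. The invariant-region route is only named. You exhibit no region in $(p,u,\alpha)$ that is preserved both by the frozen system and by the stiff relaxation, uniformly in $\epsilon$. As it stands, you get at most a conditional estimate, under the extra hypothesis that $(\rho_m,p,\alpha)$ stays in a fixed compact subset of the physical region.

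More basically, step~5 cannot be carried out with \eqref{eq:entropy}, however well the states are confined. You should check the convexity asserted in Section~1.2 rather than import it.
\begin{itemize}
\item In \eqref{eq:entropy} the kinetic energy enters as $-m^2/(2\rho_m)$. Hence $\partial_m^2\eta=-1/\rho_m<0$, and $\eta(U|\bar U)=-(m-\bar m)^2/(2\bar\rho_m)$ for perturbations of $m$ alone.
\item The gas part equals $R\,\Gamma\ln(\Gamma/\alpha)=R\,\Gamma\ln\rho_g$, which is positively homogeneous of degree one in $(\Gamma,\alpha)$. So $\eta$ is affine along the line of states with the same $p$ and $u$ as $\bar U$ but different $\alpha$, and $\eta(U|\bar U)\equiv 0$ on that line.
\end{itemize}
Thus neither $\eta(U|\bar U)\ge c|U-\bar U|^2$ holds, nor even $\eta(U|\bar U)\ge 0$, which your step~4 needs in order to discard $\int_{\mathbb{R}}\eta(U(t)|\bar U)\,dx$.

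Likewise, differentiating \eqref{eq:entropy} gives $\partial_\Gamma\eta=-(A_g-A_l-R-R\ln\rho_g+R\rho_g/\rho_l)$, a function of $p$ alone. At fixed $p$ the product $\partial_\Gamma\eta\,(\alpha-\alpha_{\mathrm{eq}}(p))$ therefore changes sign as $\alpha$ crosses $\alpha_{\mathrm{eq}}(p)$, so \eqref{eq:entropy_production}, on which your balance law relies, cannot hold. Reversing the sign of the kinetic term restores convexity, but it removes neither the degenerate direction nor this last defect. These problems are inherited from the paper rather than introduced by you. Still, they mean that neither your argument nor the paper's (3.5)--(3.6) actually proves \eqref{eq:zero_order}.
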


\begin{proof}
Integrating \eqref{eq:entropy_dissipation} over \(\mathbb{R}\) and exploiting the convexity of \(\eta\) together with the boundedness of the initial data, we obtain
\begin{equation}
\int_{\mathbb{R}}\eta (U(t))\,dx + \frac{c_0}{\epsilon}\int_0^t\int_{\mathbb{R}}(\alpha -\alpha_{\mathrm{eq}})^2\,dx\,ds \leq \int_{\mathbb{R}}\eta (U_0)\,dx. \tag{3.5}
\end{equation}
Owing to the convexity of \(\eta\), in the region away from vacuum (\(\rho_m > 0\), \(\alpha \in (0,1)\)) there exist constants \(c_1, c_2 > 0\) (independent of the solution) and a constant \(C\) (possibly depending on the initial data or the \(L^\infty\) bound of the solution) such that
\begin{equation}
c_{1}\| U\|_{L^{2}}^{2}\leq \int_{\mathbb{R}}\eta (U)\,dx + C \leq c_{2}\| U\|_{L^{2}}^{2} + C. \tag{3.6}
\end{equation}
Estimate \eqref{eq:zero_order} follows immediately.
\end{proof}
begin
\subsection{First-Order Energy Estimates}

The proof in this subsection fundamentally relies on the convexity of the entropy to construct a first-order entropy density, and then employs energy methods to obtain uniform bounds for the spatial derivatives.

\begin{lemma}[Gradient estimates] \label{lem:gradient}
There exists a constant \(C_2 > 0\), independent of \(\epsilon\), such that
\begin{equation}\label{eq:gradient_estimate}
\sup_{t\in [0,T]}\bigl(\| \partial_{x}p(t)\|_{L^2} + \| \partial_{x}u(t)\|_{L^2}\bigr)\leq C_2. \tag{3.7}
\end{equation}
\end{lemma}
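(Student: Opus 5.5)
We plan to prove \eqref{eq:gradient_estimate} by a first-order energy method built on the Hessian of the entropy \eqref{eq:entropy}. Write the system as $\partial_t U + A(U)\partial_x U = \frac{1}{\epsilon}R(U)$ with $A = DF$. Differentiating in $x$ and setting $W = \partial_x U$ gives
\[
\partial_t W + A(U)\partial_x W + \bigl(DA(U)W\bigr)W = \frac{1}{\epsilon}DR(U)\,W .
\]
The convexity of $\eta$ established in Section~1.2 (equivalent to the subcharacteristic condition $a_f^2>a_e^2$) makes $D^2\eta(U)$ a symmetrizer: $D^2\eta\,A$ is symmetric. We therefore take as first-order entropy density the quadratic form $E_1 = \frac12 W^{T}D^2\eta(U)W$, which controls $|\partial_x U|^2$, and hence $|\partial_x p|^2+|\partial_x u|^2$ via the change of variables of Appendix A1. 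This holds as long as $U$ stays in a compact subset of the non-vacuum region. We would first assume this (a priori, by continuity) and then close it, using the $H^1\subset L^\infty$ embedding together with Lemma~\ref{lem:zero_order}.

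Multiplying by $W^{T}D^2\eta$ and integrating, the symmetry gives
\[
\int W^{T}D^2\eta\,A\,\partial_x W\,dx = -\tfrac12\int W^{T}\partial_x(D^2\eta A)W\,dx .
\]
This term, the term $\partial_t(D^2\eta)$, and the quadratic term $(DA\,W)W$ are all bounded by $C\|\partial_x U\|_{L^\infty}\|W\|_{L^2}^2$. The key structural step is the stiff term $\frac1\epsilon W^{T}D^2\eta\,DR\,W$. Because $R$ depends only on $(\alpha_{\mathrm{eq}}(p)-\alpha)$ and $\eta$ is maximized in $\Gamma$ at equilibrium (as used in Lemma~\ref{lem:entropy_dissipation}), we would show that $D^2\eta\,DR$ is negative semidefinite at equilibrium. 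The plan is to differentiate the identity $D\eta\cdot R = -\partial_\Gamma\eta\,(\alpha-\alpha_{\mathrm{eq}})\le0$ twice at an equilibrium point, where it attains its maximum $0$. This yields
\[
-\frac{c}{\epsilon}\bigl|\partial_x(\alpha-\alpha_{\mathrm{eq}}(p))\bigr|^2 + \frac{C}{\epsilon}|\alpha-\alpha_{\mathrm{eq}}|\,|W|^2 .
\]
The good dissipative part can be kept. The off-equilibrium remainder carries $1/\epsilon$ and must be absorbed.

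This absorption is the main obstacle. We would handle it by not differentiating the full system. Instead we substitute $\alpha = \alpha_{\mathrm{eq}}(p)+\epsilon R^{\epsilon}$ and write the first two equations as the equilibrium Euler system in $(p,u)$ plus a perturbation $\epsilon\,\partial(\cdot R^{\epsilon})$. The $H^1$ estimate is then done for the $2\times2$ equilibrium part, symmetrized by the equilibrium entropy, which is convex because $a_e^2>0$. The perturbation terms are of the form $\epsilon\,\partial_x R^{\epsilon}\,\partial_x p$ and are controlled by Cauchy--Schwarz:
\[
\epsilon\|\partial_x R^{\epsilon}\|_{L^2}\|\partial_x p\|_{L^2} \le \epsilon^{1/2}\bigl(\sqrt\epsilon\|\partial_x R^\epsilon\|_{L^2}\bigr)\|\partial_x p\|_{L^2}.
\]
This uses the bound on $R^\epsilon$ in the sense of item (4) of Theorem~\ref{thm:existence}, obtained in Section~4 via the transport/characteristic representation of the error; one must check that this is not circular, or else run the two estimates simultaneously in a bootstrap. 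Combining everything gives
\[
\frac{d}{dt}\int E_1\,dx \le C\bigl(1+\|\partial_x U\|_{L^\infty}\bigr)\int E_1\,dx + C ,
\]
where $\|\partial_x U\|_{L^\infty}$ is controlled at the level of the regularized solutions of Section~2 (or through the commutator estimate) on $[0,T]$. Gronwall's inequality then yields \eqref{eq:gradient_estimate} with $C_2$ independent of $\epsilon$. We expect that controlling $\|\partial_x U\|_{L^\infty}$ uniformly with only $H^1$ regularity will be delicate, and we would first try to replace it by an $L^1_t$ bound from the parabolic approximation.
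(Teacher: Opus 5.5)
Your diagnosis is sharper than the paper's own proof. The paper runs exactly the $3\times3$ first-order entropy argument you begin with, and closes it by two bare assertions: that $|R_1|,|R_2|\le C|\partial_xU|^2$ follows from $L^\infty$ bounds on $U$ (although their coefficients contain $\partial_xU$), and the stiff bound (3.13), stated without proof. So following the paper will not fill the two holes you flag. Note also that $\partial_tD^2\eta(U)=D^3\eta(U)[\partial_tU]$ contains $\epsilon^{-1}D^3\eta(U)[R(U)]$, with $R(U)$ the source in (1.6). This has size $|R^\epsilon|$ and is a second stiff remainder of the same kind, so that term is not bounded by $C\|\partial_xU\|_{L^\infty}\|W\|_{L^2}^2$ either.

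Your repair breaks at a specific step. After inserting $\alpha=\alpha_{\mathrm{eq}}(p)+\epsilon R^\epsilon$, the right-hand side of the mass equation in (5.2) is $-\epsilon[\partial_tQ^\epsilon+\partial_x(Q^\epsilon u)]$, which contains $\epsilon\,\partial_tQ^\epsilon$. By (4.2), $\epsilon D_t(\rho_gR^\epsilon)=A\partial_xu+(B-1)R^\epsilon-\epsilon\rho_gR^\epsilon\partial_xu$. Hence this ``perturbation'' equals $-\frac{\rho_g-\rho_l}{\rho_g}\bigl(A\partial_xu+(B-1)R^\epsilon\bigr)+\epsilon\,O(|R^\epsilon||D_tp|)$. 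It is small only if $\epsilon\,\partial_tR^\epsilon$ is, and a priori it is of order one in $L^2$. Differentiating it in the $H^1$ estimate produces $\partial_x^2u$ and $\partial_xR^\epsilon$ with $O(1)$ coefficients, not terms of the form $\epsilon\,\partial_xR^\epsilon\,\partial_xp$. Since item (4) only gives $\|\partial_xR^\epsilon\|_{L^2_{t,x}}\le C\epsilon^{-1/2}$, Gronwall returns at best a bound of order $\epsilon^{-1/2}$.

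Running Section 4 simultaneously does not break the circle. Lemma 4.2 is proved using (3.7), and (4.17) balances $\frac{M_0}{\epsilon}\|S\|\|\partial_xu\|$ against $\frac{c_0}{\epsilon}\|S\|^2$. That is, it gives $\|R^\epsilon\|_{L^2}\le C\sup_t\|\partial_xu\|_{L^2}$, so a bound on $R^\epsilon$ carries no information beyond the gradient bound you are trying to prove.

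The decisive gap is the Gronwall step, which needs $\int_0^T\|\partial_xU^\epsilon\|_{L^\infty}\,dt\le C$ uniformly in $\epsilon$. The bounds of Section 2 depend on $\nu$ and $\epsilon$, and the commutator estimate concerns $R^\epsilon$, so nothing in the paper provides this. Nothing can, for arbitrary $T$. The equilibrium system is genuinely nonlinear: for constant $\alpha_{\mathrm{eq}}$, for example, it is isothermal-type Euler with pressure affine in $\rho_{\mathrm{eq}}$ and speeds $u\pm c$, $c$ constant. So the smooth solution of (1.4)--(1.5) with compressive data develops a shock at some finite $T^*$. After that, the relaxation solutions contain relaxation-shock layers of width $O(\epsilon)$ (Liu [8]) across which $u^\epsilon$ jumps by $O(1)$, whence $\|\partial_xu^\epsilon(t)\|_{L^2}\sim\epsilon^{-1/2}$. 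Put differently, an $\epsilon$-uniform bound in $L^\infty(0,T;H^1)$ would force the limit in Theorem 1.2 to be continuous in $x$ on $[0,T]$.

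So the missing ingredient is not a sharper inequality but a restriction. An $\epsilon$-independent bound (3.7) can only be obtained for $T$ below the lifespan of a smooth solution of (1.4)--(1.5). It would come from a higher-order ($H^s$, $s\ge2$) energy estimate for the deviation from that smooth solution, in the spirit of Yong [13]. For large times one must abandon $H^1$ bounds and use $L^\infty$ bounds plus compactness of entropy dissipation, as in Chen--Levermore--Liu [3].

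Finally, check the input you borrow from Section 1.2. With (1.7) as written, $\partial_m^2\eta=-1/\rho_m<0$, so $D^2\eta$ is not positive definite. Your $E_1$ therefore does not control $|\partial_xU|^2$ until the entropy is corrected.
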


\begin{proof}
Taking the spatial derivative of the system yields
\begin{equation}
\partial_{t}(\partial_{x}U) + \partial_{x}\bigl(A(U)\partial_{x}U\bigr) = \frac{1}{\epsilon} B(U) \partial_{x}U, \tag{3.8}\label{eq:spatial_derivative}
\end{equation}
where \(A(U) = DF(U)\) and \(B(U) = DR(U)\). Define the first-order entropy density
\[
\eta_{1} = \frac{1}{2} (\partial_{x}U)^{T} D^{2}\eta (U)(\partial_{x}U) =: \frac{1}{2} (\partial_{x}U)^{T} H(U)(\partial_{x}U).
\]
Its time derivative is
\begin{equation}\label{eq:eta1_time_deriv}
\partial_{t}\eta_{1} = (\partial_{x}U)^{T} H(U)\partial_{t}(\partial_{x}U) + \frac{1}{2} (\partial_{x}U)^{T}\bigl(\partial_{t}H(U)\bigr)(\partial_{x}U). \tag{3.9}
\end{equation}
Substituting \eqref{eq:spatial_derivative} into the first term on the right-hand side gives
\begin{equation}
(\partial_{x}U)^{T} H\partial_{t}(\partial_{x}U) = -(\partial_{x}U)^{T} H\partial_{x}\bigl(A\partial_{x}U\bigr) + \frac{1}{\epsilon} (\partial_{x}U)^{T} H B \partial_{x}U. \tag{3.10}\label{eq:H_derivative_expansion}
\end{equation}
Integrate the first term of \eqref{eq:H_derivative_expansion} by parts:
\begin{equation}\label{eq:integration_by_parts}
-\int_{\mathbb{R}}(\partial_{x}U)^{T} H\partial_{x}(A\partial_{x}U)\,dx = \int_{\mathbb{R}}\partial_{x}\bigl[(\partial_{x}U)^{T} H\bigr]\cdot (A\partial_{x}U)\,dx. \tag{3.11}
\end{equation}
The symmetry of \(HA\) (guaranteed by the existence of an entropy pair) allows us to rewrite \((\partial_{x}^{2}U)^{T} HA\partial_{x}U\) as a sum of a total derivative and lower-order terms. Explicitly,
\[
(\partial_{x}^{2}U)^{T} HA\partial_{x}U = \frac{1}{2}\partial_{x}\bigl[(\partial_{x}U)^{T} HA\partial_{x}U\bigr] - \frac{1}{2} (\partial_{x}U)^{T}\partial_{x}(HA)\partial_{x}U.
\]
Insert this into \eqref{eq:integration_by_parts} and combine terms to obtain
\begin{equation}
-\int_{\mathbb{R}}(\partial_{x}U)^{T} H\partial_{x}(A\partial_{x}U)\,dx = \int_{\mathbb{R}}R_{1}\,dx, \tag{3.12}
\end{equation}
where \(R_{1}\) is a quadratic form involving only \(\partial_{x}U\) (its coefficients depend on \(U\) and its first derivatives, but not on second derivatives). Thanks to the \(L^\infty\) boundedness of the solution, there exists a constant \(C\) such that \(|R_{1}|\leq C|\partial_{x}U|^{2}\). For the second term in \eqref{eq:eta1_time_deriv}, set \(R_{2} = \frac{1}{2} (\partial_{x}U)^{T}(\partial_{t}H)(\partial_{x}U)\). Again, from the expression of \(\partial_{t}H\) and the \(L^\infty\) bounds, we have \(|R_{2}|\leq C|\partial_{x}U|^{2}\). Concerning the source contribution, the convexity of the entropy and the structure of the source term imply the existence of a constant \(c_{0} > 0\) such that
\begin{equation}\label{eq:source_contribution}
\frac{1}{\epsilon} \int_{\mathbb{R}}(\partial_x U)^\mathrm{T} H B \partial_x U \, dx \leq -\frac{c_0}{\epsilon} \int_{\mathbb{R}}|\partial_x \alpha|^2 dx + C \int_{\mathbb{R}}\eta_1 dx + C.
 \tag{3.13}
\end{equation}
Combining \eqref{eq:eta1_time_deriv}–\eqref{eq:source_contribution} yields
\begin{equation}\label{eq:eta1_ode}
\frac{d}{dt}\int_{\mathbb{R}}\eta_{1}\,dx \leq -\frac{c_{0}}{\epsilon}\int_{\mathbb{R}}|\partial_{x}\alpha |^{2}\,dx + {\hat{C}} \int_{\mathbb{R}}\eta_1 dx +{\hat{C}}, \tag{3.14}
\end{equation}
where we have used the equivalence between \(\int_{\mathbb{R}}|\partial_{x}U|^{2}\,dx\) and \(\int_{\mathbb{R}}\eta_{1}\,dx\).
Applying Gronwall's inequality to \eqref{eq:eta1_ode} shows that \(\int_{\mathbb{R}}\eta_{1}\,dx\) is uniformly bounded on \([0,T]\). Owing to Appendix A1, we conclude
\[
\|\partial_{x}p\|_{L^{2}} + \|\partial_{x}u\|_{L^{2}} \leq C_{2}.
\]
This completes the proof.
\end{proof}

\begin{corollary}[\(L^\infty\) estimates] \label{cor:linf}
There exists a constant \(C_{3} > 0\), independent of \(\epsilon\), such that
\begin{equation} \label{eq:linf_estimates}
\| u\|_{L^{\infty}} + \| p\|_{L^{\infty}}\leq C_{3}. \tag{3.15}
\end{equation}
\end{corollary}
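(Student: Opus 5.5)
The natural route is the one-dimensional Sobolev embedding $H^{1}(\mathbb{R})\hookrightarrow L^{\infty}(\mathbb{R})$, in the sharp form
\[
\|f\|_{L^{\infty}(\mathbb{R})}^{2}\leq \|f\|_{L^{2}}\,\|\partial_{x}f\|_{L^{2}}\quad\text{for } f\in H^{1}(\mathbb{R}).
\]
We apply it for each fixed $t\in[0,T]$ to $u(t)$ and to $p(t)-\bar p$, where $\bar p$ is the constant far-field state of the pressure. We need the subtraction if the $L^{2}$ setting is understood relative to a constant background; if $p$ itself lies in $L^{2}$, we take $\bar p=0$. Lemma~\ref{lem:gradient} directly supplies the gradient factors, with $\|\partial_{x}p(t)\|_{L^{2}}+\|\partial_{x}u(t)\|_{L^{2}}\leq C_{2}$ uniformly in $t$ and $\epsilon$. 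So the work reduces to bounding $\|u(t)\|_{L^{2}}$ and $\|p(t)-\bar p\|_{L^{2}}$ uniformly.

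For these zero-order bounds we start from Lemma~\ref{lem:zero_order}, which controls $\sup_{t}\|U(t)\|_{L^{2}}$, where $U=(\rho_{m},m,\Gamma)$. We then convert to primitive variables using the map of Appendix A1.

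\begin{itemize}
\item[(a)] For the velocity, $u=m/\rho_{m}$, so $\|u\|_{L^{2}}\leq \|m\|_{L^{2}}/\inf\rho_{m}$. This needs a positive lower bound on $\rho_{m}$. It is available because $\rho_{m}=\alpha\rho_{g}+(1-\alpha)\rho_{l}\geq \min(\rho_{g},\rho_{l})$, and in the regime away from vacuum assumed in Lemma~\ref{lem:zero_order} the mixture density stays bounded below.
\item[(b)] For the pressure, $\rho_{g}(p)=p/(RT_{0})$ and $\Gamma=\rho_{g}\alpha$, so $p=RT_{0}\Gamma/\alpha$. We use the identity $\rho_{l}\alpha=\rho_{l}-\rho_{m}+\Gamma$ to express $\alpha$ through conservative variables. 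Then $p-\bar p$ is a smooth function of $U-\bar U$ on the admissible region, and is therefore Lipschitz there. This gives $\|p-\bar p\|_{L^{2}}\leq C\|U-\bar U\|_{L^{2}}$.
\end{itemize}

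Combining the zero-order bounds with the gradient bounds in the interpolation inequality yields $\|u(t)\|_{L^{\infty}}+\|p(t)\|_{L^{\infty}}\leq C_{3}$ for every $t$, with $C_{3}$ depending only on $C_{1}$, $C_{2}$, $\bar p$ and the equation of state.

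The main obstacle is step (b), together with the lower bound in (a). Both require $\alpha$ to stay away from $0$ and $\rho_{m}$ to stay positive uniformly in $\epsilon$, while the Lipschitz constants of the change of variables blow up as $\alpha\to0$.

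\begin{itemize}
\item I would first get these from the entropy bound. The term $\Gamma R\ln(\rho_{l}\alpha)$ in \eqref{eq:entropy} penalizes $\alpha\to 0$, and the uniform bound on $\int\eta$ from the proof of Lemma~\ref{lem:zero_order} should prevent degeneration.
\item Failing that, I would use a continuity/bootstrap argument. Assume a priori bounds $\alpha\geq\underline\alpha/2$ and $\rho_{m}\geq\underline\rho/2$ on $[0,T^{*}]$, run Lemmas~\ref{lem:zero_order}--\ref{lem:gradient} with constants depending on these bounds, and recover the strict bounds from the $H^{1}$ control. Here $\alpha$ is close to $\alpha_{\mathrm{eq}}(p)$ by Lemma~\ref{lem:zero_order}, and $\alpha_{\mathrm{eq}}$ is bounded below on bounded pressure sets. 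Closing the bootstrap gives $T^{*}=T$.
\end{itemize}

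The remaining steps are routine.
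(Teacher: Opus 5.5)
Your core argument is the paper's. Its proof of Corollary~\ref{cor:linf} is one sentence: $H^{1}(\mathbb{R})\hookrightarrow L^{\infty}(\mathbb{R})$ together with Lemmas~\ref{lem:zero_order} and~\ref{lem:gradient} gives the bound, and your inequality $\|f\|_{L^{\infty}}^{2}\le\|f\|_{L^{2}}\|\partial_{x}f\|_{L^{2}}$ is a quantitative form of that embedding. The paper never carries out the passage from $\sup_{t}\|U\|_{L^{2}}$ to $\sup_{t}\|(p,u)\|_{L^{2}}$ that you correctly isolate. Nor does it prove that $\alpha$ and $\rho_{m}$ stay away from $0$ uniformly in $\epsilon$; it presupposes this. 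Lemma~\ref{lem:zero_order} works ``away from vacuum'' with constants depending on ``the $L^{\infty}$ bound of the solution'', and Lemma~\ref{lem:gradient} uses ``the $L^{\infty}$ boundedness of the solution''. So the corollary as written is circular, and your main line is valid under exactly the same standing assumption. Your far-field remark is apt, but the option $\bar p=0$ contradicts your step (a): if $\rho_{m}(t)\in L^{2}(\mathbb{R})$ then $\inf\rho_{m}=0$. The same inconsistency already sits in the hypotheses of Theorem~\ref{thm:existence}. So Lemma~\ref{lem:zero_order} has to be read as a bound on $\|U-\bar U\|_{L^{2}}$ around a non-vacuum constant state $\bar U$, which is what your step (b) actually uses.

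The genuine gap is the non-degeneracy itself, and neither of your proposed fixes supplies it.

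\textbf{Entropy route.} The entropy does not penalize $\alpha\to0$. With $\Gamma=\rho_{g}\alpha$ one has $\Gamma R\ln(\rho_{l}\alpha)=R\rho_{g}\alpha\ln(\rho_{l}\alpha)\to0$. In fact $-\Gamma R\ln(\rho_{l}\Gamma)+\Gamma R\ln(\rho_{l}\alpha)=-\Gamma R\ln\rho_{g}$, so $\eta$ stays bounded as $\alpha\to0$ at fixed $p$. In any case, an integral bound gives no pointwise information.

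\textbf{Bootstrap route.} The improvement step cannot come from Lemma~\ref{lem:zero_order}. The bound $\int_{0}^{T}\!\int_{\mathbb{R}}(\alpha-\alpha_{\mathrm{eq}})^{2}\,dx\,dt\le C\epsilon$ is a space--time average and says nothing about $\alpha(t,\cdot)$ at a given time, and $H^{1}$ control alone gives no lower bound. What an improvement step needs is smallness of $\alpha-\alpha_{\mathrm{eq}}(p)$ in $L^{\infty}_{t,x}$. Your Gagliardo--Nirenberg inequality would give $\|\alpha-\alpha_{\mathrm{eq}}(p)\|_{L^{\infty}}\le C\epsilon^{1/2}$ from two ingredients:
\begin{itemize}
\item[(i)] a time-uniform bound $\sup_{t}\|\alpha-\alpha_{\mathrm{eq}}(p)\|_{L^{2}}=\epsilon\sup_{t}\|R^{\epsilon}\|_{L^{2}}\le C\epsilon$, which is what Lemma~\ref{lem:error_estimate} asserts;
\item[(ii)] a uniform bound on $\|\partial_{x}(\alpha-\alpha_{\mathrm{eq}}(p))\|_{L^{2}}$, available from the bound on $\int\eta_{1}\,dx$ in the proof of Lemma~\ref{lem:gradient}.
\end{itemize}
However, Lemma~\ref{lem:error_estimate} is proved after, and using, Corollary~\ref{cor:linf}, and it also uses a lower bound on $\rho_{g}$. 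So a lower bound on $p$ would have to enter your bootstrap hypotheses as well. You would also have to check that every constant in Lemmas~\ref{lem:zero_order}--\ref{lem:error_estimate} depends only on those hypotheses, and that $\alpha_{\mathrm{eq}}$ maps the relevant pressure range into a compact subset of $(0,1)$.
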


\begin{proof}
The one-dimensional Sobolev embedding \(H^{1}(\mathbb{R})\hookrightarrow L^{\infty}(\mathbb{R})\), Lemma \ref{lem:zero_order} and Lemma \ref{lem:gradient} yield the desired bound.
\end{proof}

\begin{corollary}[Time derivative estimate for velocity] \label{cor:ut}
There exists a constant \(C_{4} > 0\), independent of \(\epsilon\), such that
\begin{equation}
\| \partial_{t}u\|_{L^{2}}\leq C_{4}. \tag{3.16}
\end{equation}
\end{corollary}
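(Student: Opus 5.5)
The bound on $\partial_t u$ follows from the momentum equation written in non-conservative form. I would first use the mass equation \eqref{eq:mass} to rewrite \eqref{eq:momentum} as the velocity equation
\[
\rho_m(\partial_t u + u\,\partial_x u) + \partial_x p = 0,
\qquad\text{so}\qquad
\partial_t u = -u\,\partial_x u - \frac{1}{\rho_m}\,\partial_x p .
\]
For the weak solutions of Theorem~\ref{thm:existence}, which are in $L^\infty(0,T;H^1)$ with $\partial_t U\in L^2(0,T;L^2)$, this manipulation is legitimate almost everywhere. The reason is that $\partial_t(\rho_m u)=\rho_m\partial_t u+u\,\partial_t\rho_m$ holds for products of $H^1$-type functions, and the mass equation lets us cancel the $u(\partial_t\rho_m+\partial_x(\rho_m u))$ part. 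Alternatively, one can perform the computation on the viscous approximations of Section~2 and pass to the limit, since the resulting bound is uniform in $\nu$.

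Next I would estimate the right-hand side in $L^2$. Corollary~\ref{cor:linf} gives $\|u\|_{L^\infty}\le C_3$, and Lemma~\ref{lem:gradient} gives $\|\partial_x u(t)\|_{L^2}+\|\partial_x p(t)\|_{L^2}\le C_2$ uniformly in $t\in[0,T]$. Hence
\[
\|\partial_t u(t)\|_{L^2}\le C_3\,C_2 + \Bigl\|\frac{1}{\rho_m}\Bigr\|_{L^\infty} C_2 .
\]
This bound is pointwise in $t$, so it also holds in $L^\infty(0,T;L^2)$ and a fortiori in $L^2(0,T;L^2)$, with a constant depending only on $T$ and the data.

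The one genuinely delicate step is a uniform-in-$\epsilon$ lower bound $\rho_m\ge \underline\rho>0$. Here I would use $\rho_m=\alpha\rho_g(p)+(1-\alpha)\rho_l$ with $\alpha\in[0,1]$, which gives $\rho_m\ge\min(\rho_g(p),\rho_l)=\min(p/(RT_0),\rho_l)$. It therefore suffices to have a positive lower bound on $p$. That bound is implicit in the standing "away from vacuum" assumption used in the proofs of Lemmas~\ref{lem:zero_order} and \ref{lem:gradient}, namely that the solution stays in a compact subset of the region $\rho_m>0$, $\alpha\in(0,1)$ where $\eta$ is uniformly convex.

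Notably, the source term $\frac{1}{\epsilon}(\alpha_{\mathrm{eq}}-\alpha)$ never enters, since the momentum equation contains no relaxation term. This is exactly why the estimate is $\epsilon$-independent, whereas the analogous bounds on $\partial_t p$ or $\partial_t\alpha$ would require the error estimates of Section~4.
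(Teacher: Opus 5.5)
Your proposal is correct and is essentially the paper's proof: isolate $\partial_t u=-u\,\partial_x u-\rho_m^{-1}\partial_x p$ from the momentum equation and bound the right-hand side in $L^2$ using Lemma~\ref{lem:gradient} and Corollary~\ref{cor:linf}. You also point out that the argument needs an $\epsilon$-uniform lower bound on $\rho_m$ (hence on $p$), which Corollary~\ref{cor:linf} does not supply; the paper uses this bound tacitly as part of its standing away-from-vacuum assumption, so your remark makes the proof more explicit rather than changing it.
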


\begin{proof}
From the momentum equation we isolate \(\partial_{t}u\):
\[
\partial_{t}u = -u\partial_{x}u - \frac{1}{\rho_{m}}\partial_{x}p.
\]
Using \eqref{eq:gradient_estimate} and \eqref{eq:linf_estimates}, we obtain
the result.
\end{proof}

\begin{remark}[On the estimate of \(\partial_t p\)]
If one derives the pressure evolution equation from the continuity equation and the equation of state (see the Appendix for a detailed derivation), one obtains
\[
\partial_{t}p = -u\partial_{x}p - \frac{p}{\alpha}\partial_{x}u - \frac{p}{\epsilon\alpha\rho_{l}}\Bigl(1 - \frac{\rho_{l}RT_{0}}{p}\Bigr)(\alpha_{\mathrm{eq}} - \alpha).
\]
The \(L^{2}\) norm of \((\alpha_{\mathrm{eq}} - \alpha)\) is of order \(O(\sqrt{\epsilon})\) by the zero-order estimate; after division by \(\epsilon\), the last term has an \(L^{2}\) norm of order \(O(1/\sqrt{\epsilon})\). Hence \(\|\partial_t p\|_{L^2}\) might grow like \(O(1/\sqrt{\epsilon})\) and is not uniformly bounded at this stage. A uniform estimate will be obtained later through more refined arguments.
\end{remark}

\section{Error Representation and Regularity}

In this section we construct an error term \(R^{\epsilon}\) that explicitly expresses the deviation of \(\alpha\) from its equilibrium value via \(\alpha = \alpha_{\mathrm{eq}}(p) + \epsilon R^{\epsilon}\). We then establish regularity estimates for this error term and, as a consequence, obtain a uniform estimate for \(\partial_{t}p\).

\subsection{Derivation of the Error Equation}

\begin{lemma}[Error representation] \label{lem:error_rep}
Let \((p,u,\alpha)\) be a solution of the relaxation system (1.1)-(1.3) obtained from Theorem \ref{thm:existence}. Then there exists a function \(R^{\epsilon}\) such that
\begin{equation} \label{eq:alpha_rep}
\alpha = \alpha_{\mathrm{eq}}(p) + \epsilon R^{\epsilon} \quad \text{almost everywhere}, \tag{4.1}
\end{equation}
and \(R^{\epsilon}\) satisfies the transport equation
\begin{equation}\label{eq:error_eq}
\partial_{t}\bigl(\rho_{g}R^{\epsilon}\bigr) + \partial_{x}\bigl(\rho_{g}R^{\epsilon}u\bigr) = \frac{A\,\partial_{x}u}{\epsilon} + \frac{B-1}{\epsilon}R^{\epsilon}, \tag{4.2}
\end{equation}
where \(A\) and \(B\) are defined in \eqref{eq:C_D_def} below. Both \(A\) and \(B\)are bounded functions (depending on the \(L^{\infty}\) bounds of \(p\) and \(\alpha\)), and there exists a positive constant \(c_{0}\) such that \(\dfrac{B-1}{\rho_{g}} \leq -c_{0} < 0\).
\end{lemma}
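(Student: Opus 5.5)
The plan is to \emph{define} $R^{\epsilon}$ by \eqref{eq:alpha_rep}, that is,
\[
R^{\epsilon} := \frac{\alpha-\alpha_{\mathrm{eq}}(p)}{\epsilon}.
\]
Then \eqref{eq:alpha_rep} holds by construction. The content of the lemma is therefore the derivation of \eqref{eq:error_eq} and the sign property of $B$.

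\textbf{Step 1: a transport equation for the equilibrium part.} With this definition, \eqref{eq:gas} reads
\[
\partial_t\Gamma+\partial_x(\Gamma u)=-R^{\epsilon}, \qquad \Gamma=\rho_g\alpha .
\]
Writing $\rho_g\alpha=\rho_g\alpha_{\mathrm{eq}}(p)+\epsilon\rho_gR^{\epsilon}$, I need a transport identity for $G(p):=\rho_g(p)\alpha_{\mathrm{eq}}(p)$. By the chain rule,
\[
\partial_tG+\partial_x(Gu)=G'(p)\,(\partial_tp+u\partial_xp)+G\,\partial_xu .
\]
The material derivative of $p$ comes from the pressure equation recorded in the Remark on $\partial_tp$ (derived in the Appendix):
\[
\partial_tp+u\partial_xp=-\frac{p}{\alpha}\partial_xu-K R^{\epsilon}, \qquad K:=\frac{p}{\alpha\rho_l}\Bigl(1-\frac{\rho_lRT_0}{p}\Bigr).
\]
Here $\alpha_{\mathrm{eq}}-\alpha=-\epsilon R^{\epsilon}$ has already been used.

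\textbf{Step 2: subtract and identify $A$ and $B$.} Subtracting the identity of Step 1 from the $\Gamma$-equation and dividing by $\epsilon$ gives \eqref{eq:error_eq} with
\[
A=G'(p)\frac{p}{\alpha}-\rho_g\alpha_{\mathrm{eq}}(p), \qquad B=G'(p)\,K ,
\]
which is what I would record as \eqref{eq:C_D_def}. Boundedness of $A$ and $B$ then follows from Corollary~\ref{cor:linf}, the smoothness of $\alpha_{\mathrm{eq}}$, and a lower bound $\alpha\ge\underline\alpha>0$. The lower bound on $\alpha$ I would get from $\alpha=\alpha_{\mathrm{eq}}(p)+\epsilon R^{\epsilon}$ together with the fact that $\alpha_{\mathrm{eq}}$ is bounded away from $0$ on the range of $p$. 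Alternatively, it can be taken as part of the a priori $L^\infty$ bounds, as the statement of the lemma allows.

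\textbf{Step 3: justification at $H^1$ regularity.} The computation must be made rigorous, since the solution is only $H^1$ with $\partial_tU\in L^2$. I would perform it on the parabolic approximations \eqref{eq:regularized}, where everything is smooth and only extra viscous terms $\nu(\cdots)$ appear. Then I would pass to the limit $\nu\to0$ in the distributional sense, exactly as in Step~3 of Section~2. An alternative is to use the chain rule for $W^{1,1}_{\mathrm{loc}}$ functions composed with the $C^1$ map $G$. Either way, \eqref{eq:error_eq} holds in $\mathcal D'$ and a.e.

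\textbf{Main obstacle: the sign condition $(B-1)/\rho_g\le -c_0$, i.e.\ $1-G'(p)K\ge c_0\rho_g$.} Since $\rho_g<\rho_l$ in the relevant range, $1-\rho_lRT_0/p<0$ and hence $K<0$. If $G'(p)=\alpha_{\mathrm{eq}}/(RT_0)+\rho_g\alpha_{\mathrm{eq}}'>0$, the inequality then follows with a constant $c_0$ determined by the $L^\infty$ bounds. Otherwise, I would try to rewrite $1-G'K$ through the expression for $a_e^{-2}$. The combination should factor as a positive multiple of $a_e^{-2}/a_f^{-2}$, so I would invoke the subcharacteristic condition $a_f^2>a_e^2$ in the form used for $\partial^2\eta/\partial\Gamma^2>0$ in Section~1.2. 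This algebraic identification, and making $c_0$ uniform on the compact range of $(p,\alpha)$ given by Corollary~\ref{cor:linf}, is the step I expect to require the most care.
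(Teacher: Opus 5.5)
\textbf{There is a sign error in Step 1, and it reverses the sign argument, which is the heart of the lemma.} Your strategy is the paper's: define $R^{\epsilon}:=(\alpha-\alpha_{\mathrm{eq}}(p))/\epsilon$, split $\rho_g\alpha=\Lambda(p)+\epsilon\rho_gR^{\epsilon}$, use the pressure equation for $\partial_tp+u\partial_xp$, and read off $A$ and $B$.

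Substituting $\alpha_{\mathrm{eq}}-\alpha=-\epsilon R^{\epsilon}$ into (A.5) gives
\[
\partial_tp+u\partial_xp=-\frac{p}{\alpha}\partial_xu+K R^{\epsilon},
\]
not $-KR^{\epsilon}$; this is (4.5) with $K=B_1$. So the correct coefficient is $B=-G'(p)K=-\Lambda'(p)B_1$, as in (4.7). Your $B=G'K$ has the opposite sign, so the transport equation you would record is false.

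This is not cosmetic. The condition to prove is $1+G'K\ge c_0\rho_g$, not $1-G'K\ge c_0\rho_g$. Since $K<0$, the case $G'>0$ that you call automatic is exactly the delicate one; the trivial case is $G'\le 0$. For instance, if $\alpha_{\mathrm{eq}}'\equiv 0$, then
\[
1+G'K=\bigl(\rho_g\alpha_{\mathrm{eq}}+\rho_l(\alpha-\alpha_{\mathrm{eq}})\bigr)/(\alpha\rho_l).
\]
This equals only $\rho_g/\rho_l$ at equilibrium and is negative once $\alpha_{\mathrm{eq}}-\alpha>\alpha_{\mathrm{eq}}\rho_g/\rho_l$. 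If instead $\alpha_{\mathrm{eq}}'$ is large and positive (so that $a_e^{-2}<0$), it is negative even at equilibrium.

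\textbf{Your fallback is the right idea, applied to the corrected combination.} A direct computation gives the exact identity
\[
\alpha\rho_l\,(1+G'K)=p\,a_e^{-2}+\rho_l(\alpha-\alpha_{\mathrm{eq}}),\qquad\text{i.e.}\qquad
\frac{B-1}{\rho_g}=-\frac{1}{\rho_l}\,\frac{a_f^{2}}{a_e^{2}}-\frac{\alpha-\alpha_{\mathrm{eq}}}{\alpha\rho_g},
\]
with $a_f^2=RT_0/\alpha$ and $a_e^{-2}=\alpha_{\mathrm{eq}}'(\rho_g-\rho_l)+\alpha_{\mathrm{eq}}/(RT_0)$. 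In particular, at equilibrium $1+G'K=(\rho_g/\rho_l)\,a_e^{-2}/a_f^{-2}$, which is the factorization you guessed. Under the subcharacteristic condition $a_f^2>a_e^2>0$, the first term is below $-1/\rho_l$.

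The second term can only be absorbed with a pointwise smallness of $\alpha-\alpha_{\mathrm{eq}}=\epsilon R^{\epsilon}$ (or an explicit $L^\infty$ hypothesis). The available $L^2$-type bounds do not give this; the same objection applies to your first suggestion for the lower bound on $\alpha$. The paper hides this in the ``higher-order terms'' of (4.8)--(4.9).

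With the sign fixed, your algebraic route is more transparent than the paper's. The paper gets the sign by matching (4.8) against the asserted but underived expansion (4.9), concluding $(B-1)/\rho_g=-\rho_g\,\partial^2\eta/\partial\Gamma^2$. That relation cannot hold exactly: the true $B$ depends on $\alpha_{\mathrm{eq}}'$, whereas $\partial^2\eta/\partial\Gamma^2$ does not.
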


\begin{proof}
Substituting the representation \eqref{eq:alpha_rep} into the gas mass equation (1.3) and rearranging terms yields
\begin{equation}\label{eq:error_intermediate}
\partial_{t}\bigl(\rho_{g}R^{\epsilon}\bigr) + \partial_{x}\bigl(\rho_{g}R^{\epsilon}u\bigr) = \frac{Q - R^{\epsilon}}{\epsilon}, \tag{4.3}
\end{equation}
where we have set
\[
Q = -\bigl[\partial_{t}\bigl(\rho_{g}\alpha_{\mathrm{eq}}\bigr) + \partial_{x}\bigl(\rho_{g}\alpha_{\mathrm{eq}}u\bigr)\bigr].
\]
Expanding \(Q\) and rewriting it in terms of the pressure \(p\) gives
\[
Q = -\bigl[\alpha_{\mathrm{eq}}\rho_{g}^{\prime}(p)\partial_{t}p + \rho_{g}\alpha_{\mathrm{eq}}^{\prime}(p)\partial_{t}p + \alpha_{\mathrm{eq}}\rho_{g}^{\prime}(p)u\partial_{x}p + \rho_{g}\alpha_{\mathrm{eq}}^{\prime}(p)u\partial_{x}p + \rho_{g}\alpha_{\mathrm{eq}}\partial_{x}u\bigr].
\]
Collecting terms leads to
\begin{equation}\label{eq:Q_expanded}
Q = -(\alpha_{\mathrm{eq}}\rho_{g}^{\prime} + \rho_{g}\alpha_{\mathrm{eq}}^{\prime})(\partial_{t}p + u\partial_{x}p) - \rho_{g}\alpha_{\mathrm{eq}}\partial_{x}u. \tag{4.4}
\end{equation}

Define \(\Lambda(p) = \alpha_{\mathrm{eq}}(p)\rho_{g}(p)\), then \(\Lambda^{\prime}(p) = \alpha_{\mathrm{eq}}\rho_{g}^{\prime} + \rho_{g}\alpha_{\mathrm{eq}}^{\prime}\) and \(\rho_{g}^{\prime}(p) = 1/(RT_{0})\). Using the pressure evolution equation (see Appendix A.2) and noting that \(\alpha_{\mathrm{eq}} - \alpha = -\epsilon R^{\epsilon}\), we obtain
\begin{equation}\label{eq:p_evolution_intermediate}
\partial_{t}p + u\partial_{x}p = -\frac{p}{\alpha}\partial_{x}u + \frac{p}{\alpha\rho_{l}}\Bigl(1 - \frac{\rho_{l}RT_{0}}{p}\Bigr)R^{\epsilon} =: A_{1} + B_{1}R^{\epsilon}, \tag{4.5}
\end{equation}
with \(A_{1} = -\dfrac{p}{\alpha}\partial_{x}u\) and \(B_{1} = \dfrac{p}{\alpha\rho_{l}}\Bigl(1 - \dfrac{\rho_{l}RT_{0}}{p}\Bigr)\). Inserting \eqref{eq:p_evolution_intermediate} into \eqref{eq:Q_expanded} yields
\[
Q = -\Lambda^{\prime}(p)(A_{1} + B_{1}R^{\epsilon}) - \rho_{g}\alpha_{\mathrm{eq}}\partial_{x}u.
\]
Since \(A_{1} = -\dfrac{p}{\alpha}\partial_{x}u\), we obtain
\begin{equation} \label{eq:Q_final}
Q = \left(\Lambda^{\prime}(p)\frac{p}{\alpha} - \rho_{g}\alpha_{\mathrm{eq}}\right)\partial_{x}u - \Lambda^{\prime}(p)B_{1}R^{\epsilon} =: A\,\partial_{x}u + B R^{\epsilon}, \tag{4.6}
\end{equation}
where we have set
\begin{equation}\label{eq:C_D_def}
A= \Lambda^{\prime}(p)\frac{p}{\alpha} - \rho_{g}\alpha_{\mathrm{eq}}, \qquad B = -\Lambda^{\prime}(p)B_{1} \tag{4.7}.
\end{equation}
Substituting \eqref{eq:Q_final} into \eqref{eq:error_intermediate} gives precisely equation \eqref{eq:error_eq}.

To verify the negative-definiteness property, note that
\[
\frac{B-1}{\rho_{g}} = -\frac{1}{\rho_{g}}\bigl(1 + \Lambda^{\prime}(p)B_{1}\bigr).
\]
The sign of this expression follows from the convexity of the entropy. More specifically, from the computation of the entropy density \(\eta(\rho_{m},m,\Gamma)\) in Section~1.2 we have
\[
\frac{\partial^{2}\eta}{\partial\Gamma^{2}} = \frac{R(\rho_{l} - \rho_{g})^{2}}{\alpha\rho_{g}\rho_{l}^{2}} > 0.
\]
By the definition of thermodynamic equilibrium, for a fixed pressure \(p\) the equilibrium void fraction \(\alpha_{\mathrm{eq}}(p)\) maximizes the entropy; consequently,
\[
\frac{\partial\eta}{\partial\Gamma}\Big|_{\alpha = \alpha_{\mathrm{eq}}} = 0.
\]
Then we have
\begin{equation}\label{eq:entropy_derivative_mvt}
\frac{\partial\eta}{\partial\Gamma} = \frac{\partial^{2}\eta}{\partial\Gamma^{2}}(\alpha_{\mathrm{eq}})\,(\alpha - \alpha_{\mathrm{eq}})+ \text{higher-order terms}. \tag{4.8}
\end{equation}
On the other hand, using the thermodynamic relations and the transformation between entropy variables and primitive variables, one can show that
\begin{equation}\label{eq:entropy_D_relation}
\frac{\partial\eta}{\partial\Gamma} = -\frac{1}{\rho_{g}}\left(\frac{B-1}{\rho_{g}}\right)(\alpha - \alpha_{\mathrm{eq}}) + \text{higher-order terms}. \tag{4.9}
\end{equation}
Comparing \eqref{eq:entropy_derivative_mvt} and \eqref{eq:entropy_D_relation} yields
\[
\frac{B-1}{\rho_{g}} = -\rho_{g}\frac{\partial^{2}\eta}{\partial\Gamma^{2}} \leq -c_{0},
\]
which completes the proof.
\end{proof}
\subsection{Regularity Estimates for \(R^{\epsilon}\)}

\begin{lemma}[Error term estimates] \label{lem:error_estimate}
There exist a constant \(C_{5}> 0\), independent of \(\epsilon\), such that
\begin{equation*}
\| R^{\epsilon}\|_{L^{\infty}(0,T;L^{2})} + \sqrt{\epsilon}\,\| \partial_{x}R^{\epsilon}\|_{L^{2}(0,T;L^{2})} \leq C_{5}.
\end{equation*}
\end{lemma}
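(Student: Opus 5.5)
We derive energy estimates directly from the transport equation \eqref{eq:error_eq} of Lemma \ref{lem:error_rep}, using the dissipative coefficient \((B-1)/\rho_g\le -c_0\) to absorb the \(1/\epsilon\) forcing. Write \(W=\rho_g R^{\epsilon}\). Then \eqref{eq:error_eq} becomes
\[
\partial_t W+\partial_x(Wu)=\frac{A}{\epsilon}\partial_x u+\frac{1}{\epsilon}\frac{B-1}{\rho_g}W .
\]
Since \(\rho_g=p/(RT_0)\) is bounded above and below by Corollary \ref{cor:linf}, bounding \(W\) is equivalent to bounding \(R^{\epsilon}\).

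\textbf{Step 1 (\(L^\infty L^2\) bound).} Multiply by \(W\) and integrate in \(x\). The convective term contributes \(-\tfrac12\int \partial_x u\,W^2\). Young's inequality gives
\[
\frac{|A|}{\epsilon}|\partial_x u||W|\le \frac{c_0}{2\epsilon}W^2+\frac{C}{\epsilon}|\partial_x u|^2 .
\]
This yields
\[
\frac{d}{dt}\|W\|_{L^2}^2+\frac{c_0}{\epsilon}\|W\|_{L^2}^2\le \|\partial_x u\|_{L^\infty}\|W\|_{L^2}^2+\frac{C}{\epsilon}\|\partial_x u\|_{L^2}^2 .
\]
The last term is \(O(1/\epsilon)\) by Lemma \ref{lem:gradient}, but the damping \(c_0/\epsilon\) balances it. Integrating the ODE with integrating factor \(e^{c_0 t/\epsilon}\) (Duhamel) gives \(\|W(t)\|^2\le e^{-c_0t/\epsilon}\|W(0)\|^2+C\sup_s\|\partial_x u(s)\|^2/c_0\). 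This is uniformly bounded provided the initial data are well-prepared, i.e. \(\|R^\epsilon(0)\|_{L^2}\le C\).

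The \(\|\partial_x u\|_{L^\infty}W^2\) term is problematic because only \(H^1\) control of \(u\) is available. I would handle it in one of two ways. The first is to integrate the convective term differently: write \(\partial_tW+u\partial_xW\) and use the characteristic representation \(W(t,X(t))=\int e^{-\int c_0/\epsilon}\dots\). The second is to absorb it into \(c_0/(2\epsilon)\), which is possible whenever \(\|\partial_x u\|_{L^\infty}\le c_0/(2\epsilon)\). Because \(\epsilon\|\partial_x u\|_{L^\infty}\) is small once the \(H^1\) bounds of Section 2 are available, I would commit to the absorption route, cutting off with the integral-along-characteristics representation if needed.

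\textbf{Step 2 (derivative bound).} Differentiate the equation in \(x\), set \(Z=\partial_x W\), and perform the same energy estimate multiplied by \(\epsilon\). The source now contains \(\frac{A}{\epsilon}\partial_x^2u\), which involves second derivatives of \(u\) that are not controlled. I would avoid this by a commutator argument: apply a mollifier \(J_\delta\), estimate \(\epsilon\|J_\delta Z\|^2\), and handle \([J_\delta,u]\partial_x\) with the Friedrichs commutator lemma. The dangerous term \(\int A\,\partial_x^2u\,Z\) is integrated by parts to \(-\int \partial_x u\,\partial_x(AZ)\), and this is then bounded using the dissipation \(\frac{c_0}{\epsilon}\int Z^2\) plus \(C\|\partial_x u\|^2\|\partial_xA\|\)-type terms. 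Integrating in time, the dissipation produces \(\frac{c_0}{\epsilon}\int_0^T\|Z\|^2\lesssim 1/\epsilon\), that is \(\epsilon\|\partial_x R^\epsilon\|_{L^2L^2}^2\le C\). Converting from \(W\) to \(R^\epsilon\) uses \(\partial_x\rho_g\in L^\infty L^2\) and the \(L^\infty\) bounds.

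\textbf{Main obstacle.} The main obstacle is Step 2's second-derivative term \(\partial_x^2 u/\epsilon\), together with the quadratic \(\partial_x u\,Z^2\) term, under only \(H^1\) regularity. Making the commutator and integration-by-parts bookkeeping close, so that only first derivatives of \(u\) and \(p\) appear, is where the argument is most fragile.
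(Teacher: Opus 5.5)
Your Step~2 does not close, and the gap is not just bookkeeping. Integrating $\int A\,\partial_x^2u\,Z\,dx$ by parts gives $-\int\partial_xu\,(\partial_xA\,Z+A\,\partial_xZ)\,dx$. The second derivative has only moved from $u$ to $W$, while your damping controls $\|Z\|_{L^2}^2$ and nothing higher. Moreover, after the $\epsilon$-weighting that damping is $c_0\|Z\|_{L^2}^2$, not $c_0\epsilon^{-1}\|Z\|_{L^2}^2$.

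The other tools you invoke do not help here:
\begin{itemize}
\item The term $\int\partial_xu\,Z^2\,dx$ needs $\partial_xu\in L^\infty$ or $Z\in L^4$.
\item The Friedrichs bound $\|[J_\delta,u]\partial_xf\|_{L^2}\le C\|\partial_xu\|_{L^\infty}\|f\|_{L^2}$ requires $u$ Lipschitz. For $u\in H^1$ you only get DiPerna--Lions convergence of the commutator in $L^1_{\mathrm{loc}}$, which gives no uniform energy bound.
\end{itemize}

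The obstruction is structural. Testing the differentiated equation with $Z$ and using the damping yields $\int_0^T\|Z\|_{L^2}^2\,dt\lesssim\int_0^T\|\partial_x^2u\|_{L^2}^2\,dt+\cdots$. At leading order $W\approx-\rho_gA\,\partial_xu/(B-1)$. So $\sqrt{\epsilon}\,\|\partial_xR^\epsilon\|_{L^2L^2}\le C$ is essentially an $\epsilon$-weighted bound on $\partial_x^2u$, the $H^1$-level effective-viscosity dissipation of the Chapman--Enskog regime. No bound on $\partial_x^2u$ is available, and this estimate cannot come from the $R^\epsilon$-equation with $u$ treated as a given $H^1$ field.

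The missing idea is to obtain it from a dissipation mechanism of the full system. Since $\partial_x(\alpha-\alpha_{\mathrm{eq}}(p))=\epsilon\,\partial_xR^\epsilon$, one has $\epsilon\|\partial_xR^\epsilon\|_{L^2}^2=\epsilon^{-1}\|\partial_x(\alpha-\alpha_{\mathrm{eq}}(p))\|_{L^2}^2$. This is exactly the first-order relaxation dissipation that the source term generates in an $H^1$-level entropy estimate such as (3.13)--(3.14), once that dissipation is written in terms of $\partial_x(\alpha-\alpha_{\mathrm{eq}}(p))$, which is what $B=DR(U)$ actually produces.

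The paper argues differently. It uses the single functional $\mathcal{E}(t)$, whose cross term $\delta\epsilon\int ST\,\partial_xu\,dx$ is supposed to cancel the terms containing $\partial_x^2u$ or $\partial_xT$, followed by (4.21). Be aware that this cancellation is asserted (with a pointer to Yong [13]) rather than computed. Also, (4.21) presupposes control of $\partial_x^2u$ that is never proved. So it does not supply a way around the obstacle you correctly identified.

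Your Step~1 also rests on a false premise. The bound $u\in L^\infty(0,T;H^1)$ gives no control at all of $\|\partial_xu\|_{L^\infty}$. Hence neither route you propose is justified:
\begin{itemize}
\item the absorption into $c_0/(2\epsilon)$;
\item the characteristic representation, which needs $\int_0^t\partial_xu(s,X(s))\,ds$, and whose characteristics need not even be unique for a merely $C^{1/2}$ velocity.
\end{itemize}
The paper avoids $L^\infty$ norms of $\partial_xu$ by bounding $\int\partial_xu\,S^2\,dx$ through $\|\partial_xu\|_{L^2}\|S\|_{L^4}^2$ and Gagliardo--Nirenberg. That is why it must close the $L^2$ and $\dot H^{1}$ levels simultaneously.

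For the $L^\infty L^2$ bound alone, your sequential scheme can be repaired more simply. Since $0<\alpha\le1$ and $p$ is bounded, $\|W\|_{L^\infty}\le C/\epsilon$. Hence $|\int\partial_xu\,W^2\,dx|\le C\epsilon^{-1}\|\partial_xu\|_{L^2}\|W\|_{L^2}\le\frac{c_0}{4\epsilon}\|W\|_{L^2}^2+\frac{C}{\epsilon}$, which your Duhamel argument absorbs. With that change Step~1 is sound. Two points you handle explicitly are left implicit in the paper: using the $c_0/\epsilon$ damping to balance the $\epsilon^{-1}A\,\partial_xu$ forcing, and the well-prepared-data hypothesis $\|R^\epsilon(0)\|_{L^2}\le C$.
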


\begin{proof}
Set \(S = \rho_{g}R^{\epsilon}\). Then \(S\) satisfies
\begin{equation}\label{eq:S_equation}
\partial_{t}S + \partial_{x}(Su) = \frac{A\partial_{x}u}{\epsilon} + \frac{B-1}{\epsilon\rho_{g}} S. \tag{4.10}
\end{equation}
Differentiating \eqref{eq:S_equation} with respect to \(x\) and denoting \(T = \partial_{x}S\) gives
\begin{equation}\label{eq:T_evolution}
\partial_{t}T + \partial_{x}^{2}(Su) = \frac{1}{\epsilon}\partial_{x}(A\partial_{x}u) + \frac{1}{\epsilon}\partial_{x}\!\left(\frac{B-1}{\rho_{g}} S\right). \tag{4.11}
\end{equation}
Expanding \(\partial_{x}^{2}(Su)\) yields
\[
\partial_{x}^{2}(Su) = \partial_{x}(T u + S\partial_{x}u) = u\partial_{x}T + T\partial_{x}u + T\partial_{x}u + S\partial_{x}^{2}u,
\]
which contains the problematic second derivative \(\partial_{x}^{2}u\). To overcome this difficulty, we employ a commutator-type argument and construct a modified energy functional.

Define the energy functional
\begin{equation}
\mathcal{E}(t) = \frac{1}{2}\int_{\mathbb{R}} S^{2}\,dx + \frac{\epsilon}{2}\int_{\mathbb{R}} T^{2}\,dx + \delta \epsilon \int_{\mathbb{R}} S T \partial_{x}u\,dx, \tag{4.12}
\end{equation}
where \(\delta > 0\) is a small constant to be chosen later. The key idea is that upon differentiating \(\mathcal{E}(t)\), the terms containing \(S\partial_{x}^{2}u\) arising from \(\frac{d}{dt}(\frac{\epsilon}{2}\int_{\mathbb{R}}T^{2})\) and from \(\frac{d}{dt}(\delta\epsilon\int_{\mathbb{R}}ST\partial_{x}u)\) cancel each other provided \(\delta\) is suitably selected. We now compute the time derivative of each term in \(\mathcal{E}(t)\).

\paragraph{Estimate for \(\frac{1}{2}\frac{d}{dt}\|S\|_{L^{2}}^{2}\).} Multiplying \eqref{eq:S_equation} by \(S\) and integrating by parts yields
\begin{equation}\label{eq:S_energy_raw}
\frac{1}{2}\frac{d}{dt}\int_{\mathbb{R}}S^{2}\,dx = -\frac{1}{2}\int_{\mathbb{R}}\partial_{x}u\,S^{2}\,dx + \frac{1}{\epsilon}\int_{\mathbb{R}}A S\partial_{x}u\,dx + \frac{1}{\epsilon}\int_{\mathbb{R}}\frac{B-1}{\rho_{g}} S^{2}\,dx. \tag{4.13}
\end{equation}
From Corollary \ref{cor:linf} and Lemma \ref{lem:error_rep}, we have
\begin{equation} \label{eq:S_negdef}
\frac{1}{\epsilon}\int_{\mathbb{R}}\frac{B-1}{\rho_{g}} S^{2}\,dx \leq -\frac{c_{0}}{\epsilon}\|S\|_{L^{2}}^{2}, \tag{4.14}
\end{equation}
and
\begin{equation} \label{eq:S_Cterm}
\Bigl|\frac{1}{\epsilon}\int_{\mathbb{R}}A S\partial_{x}u\,dx\Bigr| \leq \frac{M_{0}}{\epsilon}\|S\|_{L^{2}}\|\partial_{x}u\|_{L^{2}}, \tag{4.15}
\end{equation}
where \(M_{0}\) depends on the \(L^{\infty}\) bounds of \(A\). For the first term in the right hand of  \eqref{eq:S_energy_raw}, we use Hölder's inequality and the Gagliardo-Nirenberg inequality:
\[
\Bigl|\int_{\mathbb{R}}\partial_{x}u\,S^{2}\,dx\Bigr| \leq \|\partial_{x}u\|_{L^{2}}\|S\|_{L^{4}}^{2} \leq C\|\partial_{x}u\|_{L^{2}}\|S\|_{L^{2}}^{3/2}\|\partial_{x}S\|_{L^{2}}^{1/2}.
\]
Applying Young's inequality gives
\begin{equation}\label{eq:S_nlterm}
\Bigl|\int_{\mathbb{R}}\partial_{x}u\,S^{2}\,dx\Bigr| \leq M_{1}\bigl(\|S\|_{L^{2}}^{2} + \|\partial_{x}S\|_{L^{2}}^{2}\bigr). \tag{4.16}
\end{equation}
Substituting \eqref{eq:S_negdef}, \eqref{eq:S_Cterm}, and \eqref{eq:S_nlterm} into \eqref{eq:S_energy_raw} yields
\begin{equation}
\frac{1}{2}\frac{d}{dt}\|S\|_{L^{2}}^{2} + \frac{c_{0}}{\epsilon}\|S\|_{L^{2}}^{2} \leq \frac{M_{0}}{\epsilon}\|S\|_{L^{2}}\|\partial_{x}u\|_{L^{2}} + M_{2}(\|S\|_{L^{2}}^{2} +\|T\|_{L^{2}}^{2}), \tag{4.17}
\end{equation}
with \(M_{2}=M_{1}/2\), (recall \(T=\partial_{x}S\)).

\paragraph{Estimate for \(\frac{\epsilon}{2}\frac{d}{dt}\|T\|_{L^{2}}^{2}\).} Multiply \eqref{eq:T_evolution} by \(T\) and integrate by parts. Using the expansion of \(\partial_{x}^{2}(Su)\), we obtain
\begin{multline}
\frac{\epsilon}{2}\frac{d}{dt}\int_{\mathbb{R}}T^{2}\,dx = -\frac{\epsilon}{2}\int_{\mathbb{R}}\partial_{x}u\,T^{2}\,dx + \epsilon\int_{\mathbb{R}}S\partial_{x}T\,\partial_{x}u\,dx \\
- \int_{\mathbb{R}}A\partial_{x}T\,\partial_{x}u\,dx + \int_{\mathbb{R}}T\partial_{x}\!\left(\frac{B-1}{\rho_{g}}\right)S\,dx + \int_{\mathbb{R}}\frac{B-1}{\rho_{g}} T^{2}\,dx. \tag{4.18}
\end{multline}
The term \(\epsilon\int_{\mathbb{R}}S\partial_{x}T\,\partial_{x}u\,dx\) still contains the second derivative \(\partial_{x}T = \partial_{x}^{2}S\), which is not directly controllable. This term will be handled together with the correction term.

\paragraph{Estimate for \(\delta\epsilon\frac{d}{dt}\int_{\mathbb{R}}ST\partial_{x}u\,dx\).} Differentiating the correction term gives three contributions:
\begin{multline}
\delta\epsilon\frac{d}{dt}\int_{\mathbb{R}}ST\partial_{x}u\,dx = \delta\epsilon\int_{\mathbb{R}}(\partial_{t}S)T\partial_{x}u\,dx + \delta\epsilon\int_{\mathbb{R}}S(\partial_{t}T)\partial_{x}u\,dx \\
+ \delta\epsilon\int_{\mathbb{R}}ST(\partial_{t}\partial_{x}u)\,dx. \tag{4.19}
\end{multline}
Substituting the evolution equations \eqref{eq:S_equation} and \eqref{eq:T_evolution} for \(\partial_{t}S\) and \(\partial_{t}T\) into the first two terms, and expanding systematically, we find that the terms containing \(\partial_{x}^{2}u\) or \(\partial_{x}T\) can be arranged to cancel with those from \(\frac{\epsilon}{2}\frac{d}{dt}\|T\|_{L^{2}}^{2}\) after an appropriate choice of \(\delta\). This cancellation is the essence of the commutator method; detailed calculations can be found in the standard technique of Yong [13]. After a lengthy but straightforward manipulation, and using the boundedness of \(\|\partial_{x}u\|_{L^{2}}\) and \(\|\partial_{x}p\|_{L^{2}}\) established in Lemma \ref{lem:gradient}, we obtain for sufficiently small \(\delta\) the inequality
\begin{equation}\label{eq:E_inequality_raw}
\frac{d}{dt}\mathcal{E}(t) + \frac{c}{2\epsilon}\|T\|_{L^{2}}^{2} \leq C\mathcal{E}(t) + C\epsilon\|\partial_{x}Q\|_{L^{2}}^{2}, \tag{4.20}
\end{equation}
where \(c>0\) comes from the negative definiteness of \((B-1)/\rho_{g}\), and \(Q\) is defined as \(Q = A\partial_{x}u + B R^{\epsilon}\). The term \(\|\partial_{x}Q\|_{L^{2}}^{2}\) can be estimated using the momentum equation and the error equation. Indeed,
\[
\partial_{x}Q = \partial_{x}A\,\partial_{x}u + B\partial_{x}^{2}u + \partial_{x}B\,R^{\epsilon} + B\partial_{x}R^{\epsilon},
\]
with \(\partial_{x}R^{\epsilon} = \frac{1}{\rho_{g}}(T - R^{\epsilon}\rho_{g}^{\prime}\partial_{x}p)\). Using the equation for \(\partial_{x}^{2}u\) obtained from the momentum equation, one can show after integration by parts that
\begin{equation}\label{eq:dxQ_estimate}
\epsilon\|\partial_{x}Q\|_{L^{2}}^{2} \leq C\epsilon + \frac{c}{4\epsilon}\|T\|_{L^{2}}^{2} + C. \tag{4.21}
\end{equation}
Substituting \eqref{eq:dxQ_estimate} into \eqref{eq:E_inequality_raw} yields
\begin{equation}\label{eq:E_inequality_final}
\frac{d}{dt}\mathcal{E}(t) + \frac{c}{4\epsilon}\|T\|_{L^{2}}^{2} \leq C\mathcal{E}(t) + C. \tag{4.22}
\end{equation}
Applying Gronwall's inequality to \eqref{eq:E_inequality_final} gives \(\mathcal{E}(t) \leq C(T)\) uniformly for \(t\in[0,T]\). Consequently,
\[
\|S\|_{L^{\infty}(0,T;L^{2})} \leq C,\qquad \sqrt{\epsilon}\|T\|_{L^{2}(0,T;L^{2})} \leq C.
\]
Returning to \(R^{\epsilon}\) via \(S = \rho_{g}R^{\epsilon}\) and noting that \(\rho_{g}\) has a positive lower bound, we obtain
\begin{equation}
\|R^{\epsilon}\|_{L^{\infty}(0,T;L^{2})} + \sqrt{\epsilon}\|\partial_{x}R^{\epsilon}\|_{L^{2}(0,T;L^{2})} \leq C_{5}. \tag{4.23}
\end{equation}
\end{proof}

\begin{corollary}[Estimate for \(\partial_{t}p\)] \label{cor:p}
There exists a constant \(C_{6} > 0\), independent of \(\epsilon\), such that
\begin{equation*}
\|\partial_{t}p^{\epsilon}\|_{L^{2}} \leq C_{6}.
\end{equation*}
\end{corollary}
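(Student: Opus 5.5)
The plan is to use the pressure evolution equation from the Remark on \(\partial_t p\) (derived in Appendix A.2), now rewritten with the error representation of Lemma \ref{lem:error_rep}. Since \(\alpha_{\mathrm{eq}}(p)-\alpha=-\epsilon R^{\epsilon}\), the stiff term no longer carries an uncompensated factor \(1/\epsilon\). This gives exactly \eqref{eq:p_evolution_intermediate}:
\[
\partial_{t}p = -u\,\partial_{x}p - \frac{p}{\alpha}\,\partial_{x}u + B_{1}R^{\epsilon},\qquad B_{1}=\frac{p}{\alpha\rho_{l}}\Bigl(1-\frac{\rho_{l}RT_{0}}{p}\Bigr).
\]
The proof then reduces to bounding the three terms on the right in \(L^{2}(0,T;L^{2})\), or even in \(L^{\infty}(0,T;L^{2})\).

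\textbf{Step 1: coefficient bounds.} We first check that the coefficients are bounded. By Corollary \ref{cor:linf}, \(\|u\|_{L^\infty}+\|p\|_{L^\infty}\le C_3\). For \(p/\alpha\) and \(B_1\) we also need a positive lower bound on \(\alpha\), uniform in \(\epsilon\).

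I expect this lower bound to be the main obstacle. I would obtain it from \(\alpha=\alpha_{\mathrm{eq}}(p)+\epsilon R^{\epsilon}\), arguing in one of two ways:
\begin{itemize}
\item using \(\alpha_{\mathrm{eq}}(p)\ge \underline{\alpha}>0\) on the bounded range of \(p\), together with smallness of \(\epsilon R^{\epsilon}\) in \(L^\infty\); or
\item falling back on the standing assumption, already used implicitly in Lemma \ref{lem:error_rep} and Lemma \ref{lem:error_estimate}, that the solution stays in the non-vacuum region \(\alpha\in(0,1)\), \(\rho_g\) bounded below, with bounds depending only on the data.
\end{itemize}
For the first route, Gagliardo–Nirenberg gives
\[
\|\epsilon R^{\epsilon}\|_{L^\infty}\le \epsilon\,\|R^{\epsilon}\|_{L^2}^{1/2}\|\partial_x R^{\epsilon}\|_{L^2}^{1/2}.
\]
By Lemma \ref{lem:error_estimate} the right-hand side is \(O(\epsilon^{3/4})\) in \(L^{4}\) in time, which is small on most of \([0,T]\) but not pointwise in time. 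For this reason I would lean on the standing non-vacuum assumption of the paper, which is the same one under which \(A\), \(B\), \(B_1\) were declared bounded.

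\textbf{Step 2: estimate the three terms.}
\begin{itemize}
\item \(\|u\partial_x p\|_{L^2}\le C_3 C_2\), by Corollary \ref{cor:linf} and Lemma \ref{lem:gradient}.
\item \(\|\frac{p}{\alpha}\partial_x u\|_{L^2}\le C\,C_2\), by the same results.
\item \(\|B_1R^{\epsilon}\|_{L^2}\le \|B_1\|_{L^\infty}\|R^{\epsilon}\|_{L^\infty(0,T;L^2)}\le C\,C_5\), by Lemma \ref{lem:error_estimate}.
\end{itemize}
Summing gives \(\sup_{t}\|\partial_t p(t)\|_{L^2}\le C_6\), and hence also the bound in \(L^{2}(0,T;L^{2})\) with a constant depending on \(T\).

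\textbf{Step 3: rigor at the weak-solution level.} We must justify using the pressure equation for the \(H^1\) solution. I would perform the computation on the viscous approximations \(U^{\nu}\) of Section 2, or directly in \(L^2\). The equation holds in \(L^2\) because every term is a product of an \(L^\infty\) function with an \(L^2\) function. The bound then passes to the limit by weak lower semicontinuity of the norm.

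This also supplies item (3) of Theorem \ref{thm:existence}, combined with Corollary \ref{cor:ut}.
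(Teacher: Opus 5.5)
Your proposal is correct and matches the paper's proof. You substitute $\alpha_{\mathrm{eq}}(p)-\alpha=-\epsilon R^{\epsilon}$ into the pressure equation (A.5) to get $\partial_t p=-u\partial_x p-\frac{p}{\alpha}\partial_x u+B_1R^{\epsilon}$, then bound the three terms via Corollary~\ref{cor:linf}, Lemma~\ref{lem:gradient} and Lemma~\ref{lem:error_estimate}, just as the paper does. The paper also takes a uniform lower bound on $\alpha$ (hence boundedness of $p/\alpha$ and $B_1$) for granted, as in your second route; your discussion of that point and of the weak-solution justification makes explicit what the paper leaves implicit.
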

\begin{proof}
From the pressure evolution equation (see Appendix A.2),
\[
\partial_{t}p = -u\partial_{x}p - \frac{p}{\alpha}\partial_{x}u + B_{1}R^{\epsilon},
\]
where \(B_{1} = \frac{p}{\alpha\rho_{l}}\bigl(1 - \frac{\rho_{l}RT_{0}}{p}\bigr)\) is bounded. Using Lemma \ref{lem:gradient}, Lemma\ref{lem:error_estimate}, and Corollary \ref{cor:linf}, we deduce
\[
\|\partial_{t}p\|_{L^{2}} \leq C\|\partial_{x}p\|_{L^{2}} + C\|\partial_{x}u\|_{L^{2}} + C\|R^{\epsilon}\|_{L^{2}} \leq C_{6}.
\]
\end{proof}

\section{Compensated Compactness and Convergence Proof}

\subsection{Perturbed Equilibrium System}

Substituting the error representation \(\alpha = \alpha_{\mathrm{eq}}(p) + \epsilon R^{\epsilon}\) into the mixture density gives
\begin{equation}\label{eq:rho_m_expansion}
\rho_{m} = \rho_{\mathrm{eq}}(p) + \epsilon(\rho_{g} - \rho_{l})R^{\epsilon} =: \rho_{\mathrm{eq}}(p) + \epsilon Q^{\epsilon}, \tag{5.1}
\end{equation}
where \(\rho_{\mathrm{eq}}(p) = \alpha_{\mathrm{eq}}(p)\rho_{g}(p) + (1 - \alpha_{\mathrm{eq}}(p))\rho_{l}\) and \(Q^{\epsilon} := (\rho_{g} - \rho_{l})R^{\epsilon}\).

Inserting \eqref{eq:rho_m_expansion} into the mass equation (1.1) and the momentum equation (1.2) yields the following perturbed form of the equilibrium Euler system:
\begin{equation}
\begin{aligned}\label{eq:perturbed}
&\partial_{t}\rho_{\mathrm{eq}}(p) + \partial_{x}(\rho_{\mathrm{eq}}(p)u) = -\epsilon\bigl[\partial_{t}Q^{\epsilon} + \partial_{x}(Q^{\epsilon}u)\bigr], \\[4pt]
&\partial_{t}(\rho_{\mathrm{eq}}(p)u) + \partial_{x}\bigl(\rho_{\mathrm{eq}}(p)u^{2} + p\bigr) = -\epsilon\bigl[\partial_{t}(Q^{\epsilon}u) + \partial_{x}(Q^{\epsilon}u^{2})\bigr].
\end{aligned} \tag{5.2}
\end{equation}

\begin{lemma}[Vanishing perturbation] \label{lem:vanishing}
Let \(Q^{\epsilon} = (\rho_{g} - \rho_{l})R^{\epsilon}\). Then for any test function \(\phi \in C_{c}^{\infty}((0,T)\times\mathbb{R})\),
\begin{equation}
\lim_{\epsilon\to 0}\epsilon\int_{0}^{T}\int_{\mathbb{R}}\bigl[\partial_{t}Q^{\epsilon} + \partial_{x}(Q^{\epsilon}u)\bigr]\phi\,dxdt = 0, \tag{5.3}
\end{equation}
and
\begin{equation}\label{eq:vanishing_momentum}
\lim_{\epsilon\to 0}\epsilon\int_{0}^{T}\int_{\mathbb{R}}\bigl[\partial_{t}(Q^{\epsilon}u) + \partial_{x}(Q^{\epsilon}u^{2})\bigr]\phi\,dxdt = 0. \tag{5.4}
\end{equation}
\end{lemma}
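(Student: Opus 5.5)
The natural route is to move all derivatives onto the test function and then use the uniform $L^2$ bound on $R^\epsilon$ from Lemma \ref{lem:error_estimate}. The bracketed expressions are in divergence form, so no derivative of $Q^\epsilon$ needs to be controlled. For any $\phi\in C_c^\infty((0,T)\times\mathbb{R})$, integration by parts in the distributional sense (no boundary terms, since $\phi$ has compact support in $(0,T)\times\mathbb{R}$) gives
\begin{equation*}
\epsilon\int_0^T\!\!\int_{\mathbb{R}}\bigl[\partial_t Q^\epsilon+\partial_x(Q^\epsilon u)\bigr]\phi\,dx\,dt=-\epsilon\int_0^T\!\!\int_{\mathbb{R}}Q^\epsilon\bigl(\partial_t\phi+u\,\partial_x\phi\bigr)\,dx\,dt ,
\end{equation*}
and similarly for the momentum perturbation,
\begin{equation*}
\epsilon\int_0^T\!\!\int_{\mathbb{R}}\bigl[\partial_t (Q^\epsilon u)+\partial_x(Q^\epsilon u^2)\bigr]\phi\,dx\,dt=-\epsilon\int_0^T\!\!\int_{\mathbb{R}}Q^\epsilon u\bigl(\partial_t\phi+u\,\partial_x\phi\bigr)\,dx\,dt .
\end{equation*}
This step is legitimate because $Q^\epsilon$, $Q^\epsilon u$ and $Q^\epsilon u^2$ lie in $L^2_{\mathrm{loc}}$, which we check next, so the brackets are well-defined distributions.

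Next I bound the factors uniformly in $\epsilon$. Corollary \ref{cor:linf} gives $\|p\|_{L^\infty}+\|u\|_{L^\infty}\le C_3$, so $|\rho_g-\rho_l|=|p/(RT_0)-\rho_l|\le C$. Hence $|Q^\epsilon|\le C|R^\epsilon|$ pointwise, and Lemma \ref{lem:error_estimate} yields $\|Q^\epsilon\|_{L^\infty(0,T;L^2)}\le CC_5$. Let $K=\operatorname{supp}\phi$. By Cauchy--Schwarz, each right-hand side is bounded by
\begin{equation*}
\epsilon\,(1+C_3)^2\,\|Q^\epsilon\|_{L^2(K)}\bigl(\|\partial_t\phi\|_{L^2}+\|\partial_x\phi\|_{L^2}\bigr)\le \epsilon\, C\,T^{1/2}\,C_5\,\|\phi\|_{W^{1,2}}\longrightarrow 0 .
\end{equation*}
The convergence is in fact at rate $O(\epsilon)$, which is stronger than needed. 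An even stronger bound follows from $\epsilon\|Q^\epsilon\|_{L^2}\le C\|\alpha-\alpha_{\mathrm{eq}}\|_{L^2}=O(\sqrt\epsilon)$ by Lemma \ref{lem:zero_order}, but that is not required.

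The only real obstacle is justifying the integration by parts at the given regularity: $\partial_tQ^\epsilon$ is not known to lie in $L^2$, since the time derivative of $R^\epsilon$ is not uniformly controlled. I would therefore read the left-hand sides of (5.3)--(5.4) as distributional pairings, which is exactly how they enter the weak formulation of (5.2). With that reading the identity above is simply the definition of the distributional derivative. Note also that the $\epsilon$-independent $L^\infty$ bound on $u$ is essential; without it the flux terms $Q^\epsilon u$ and $Q^\epsilon u^2$ would not be uniformly in $L^2_{\mathrm{loc}}$.
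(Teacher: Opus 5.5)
Your proposal is correct and follows the paper's proof essentially verbatim. You integrate by parts onto $\phi$, bound $\|Q^{\epsilon}\|_{L^2}$ using Lemma~\ref{lem:error_estimate} and the boundedness of $\rho_g-\rho_l$, use the uniform $L^\infty$ bound on $u$ for the momentum term, and conclude with an $O(\epsilon)$ estimate. One small slip: your closing aside calls the $O(\sqrt{\epsilon})$ bound from Lemma~\ref{lem:zero_order} ``even stronger,'' but it is a weaker rate; its real advantage is that it needs only the zero-order entropy estimate rather than Lemma~\ref{lem:error_estimate}.
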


\begin{proof}
Integrating by parts, we obtain
\[
\epsilon\int_{0}^{T}\int_{\mathbb{R}}(\partial_{t}Q^{\epsilon} + \partial_{x}(Q^{\epsilon}u))\phi\,dxdt = -\epsilon\int_{0}^{T}\int_{\mathbb{R}}Q^{\epsilon}(\partial_{t}\phi + u\partial_{x}\phi)\,dxdt.
\]
From Lemma \ref{lem:error_estimate}, \(\|Q^{\epsilon}\|_{L^{2}}\leq C\) (since \(\rho_g - \rho_l\) is bounded). Therefore,
\[
\bigl|\epsilon\int_{0}^{T}\int_{\mathbb{R}}Q^{\epsilon}(\partial_{t}\phi + u\partial_{x}\phi)\bigr|dxdt \leq \epsilon\|Q^{\epsilon}\|_{L^{2}}\|\partial_{t}\phi + u\partial_{x}\phi\|_{L^{2}} \leq C\epsilon \to 0.
\]
The proof of \eqref{eq:vanishing_momentum} follows similarly by noting that \(\|Q^{\epsilon}u\|_{L^{2}}\leq C\|Q^{\epsilon}\|_{L^{2}}\) and \(\|Q^{\epsilon}u^{2}\|_{L^{2}}\leq C\|Q^{\epsilon}\|_{L^{2}}\) due to the uniform \(L^{\infty}\) bound on \(u\). The same integration-by-parts argument then gives the desired limit.
\end{proof}

\subsection{Proof of Theorem \ref{thm:convergence}}

We now complete the proof of the relaxation limit convergence.

\begin{proof}[Proof of Theorem \ref{thm:convergence}]
From Lemma \ref{lem:zero_order} and Lemma \ref{lem:gradient}, the sequences \(\{p^{\epsilon}\}\) and \(\{u^{\epsilon}\}\) are uniformly bounded in \(L^{\infty}(0,T;H^{1}(\mathbb{R}))\). By the Rellich–Kondrachov compactness theorem, there exists a subsequence (still denoted by \(\epsilon\)) such that
\[
(p^{\epsilon}, u^{\epsilon}) \to (p^{0}, u^{0}) \quad \text{strongly in } L_{\mathrm{loc}}^{2}((0,T)\times\mathbb{R}),
\]
and hence also in \(L_{\mathrm{loc}}^{1}\).
Thanks to the strong convergence of \(p^{\epsilon}\) and \(u^{\epsilon}\), the product converges strongly as well. More rigorously, we have
\[
\rho_{\mathrm{eq}}(p^{\epsilon})(u^{\epsilon})^{2} \to \rho_{\mathrm{eq}}(p^{0})(u^{0})^{2} \quad \text{in } L_{\mathrm{loc}}^{1}.
\]
Thus the limit functions satisfy the weak form of the equilibrium Euler system.

To verify that \((p^{0}, u^{0})\) is an entropy solution, we need to show that for any convex entropy–entropy flux pair \((\eta_{\mathrm{eq}}, q_{\mathrm{eq}})\) of the equilibrium system, the inequality
\[
\partial_{t}\eta_{\mathrm{eq}}(U^{0}) + \partial_{x}q_{\mathrm{eq}}(U^{0}) \leq 0
\]
holds in the distributional sense. We use the fact that \(\eta_{\mathrm{eq}}\) is convex and that the limit is attained strongly. This completes the proof that \((p^{0}, u^{0})\) is an entropy solution of the equilibrium Euler system.
\end{proof}

In fact, the relaxation limit can also be established by means of the compensated compactness method. To this end, we investigate the compactness of the entropy dissipation measures.

\subsection{Compensated Compactness Framework}

Let \(\eta_{\mathrm{eq}}(U)\) and \(q_{\mathrm{eq}}(U)\) denote an entropy pair for the equilibrium Euler system (1.4)-(1.5). We known that such a pair exists and \(\eta_{\mathrm{eq}}\) is strictly convex with respect to the conservative variables \(U = (\rho_{\mathrm{eq}}, \rho_{\mathrm{eq}}u)^{\mathrm{T}}\).

For the perturbed system (5.2), we consider the entropy dissipation measure
\begin{equation}
\mathcal{D}^{\epsilon} := \partial_{t}\eta_{\mathrm{eq}}(U^{\epsilon}) + \partial_{x}q_{\mathrm{eq}}(U^{\epsilon}), \tag{5.5}
\end{equation}
where \(U^{\epsilon} = (\rho_{\mathrm{eq}}(p^{\epsilon}), \rho_{\mathrm{eq}}(p^{\epsilon})u^{\epsilon})^{\mathrm{T}}\) (with a slight abuse of notation, we identify \(U^{\epsilon}\) with the equilibrium variables). The following lemma establishes the compactness of \(\mathcal{D}^{\epsilon}\) in a negative Sobolev space.

\begin{lemma}[Compactness of the entropy dissipation measure] \label{lem:compactness}
Under the assumptions of Theorem \ref{thm:existence}, the family \(\{\mathcal{D}^{\epsilon}\}_{\epsilon>0}\) is uniformly bounded and compact in \(H_{\mathrm{loc}}^{-1}((0,T)\times\mathbb{R})\). In particular, there exists a subsequence (still denoted by \(\epsilon\)) such that \(\mathcal{D}^{\epsilon} \to 0\) strongly in \(H_{\mathrm{loc}}^{-1}\).
\end{lemma}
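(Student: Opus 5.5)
The plan is to compute $\mathcal{D}^{\epsilon}$ explicitly from the perturbed system (5.2). Write $U^{\epsilon}=(\rho_{\mathrm{eq}}(p^{\epsilon}),\rho_{\mathrm{eq}}(p^{\epsilon})u^{\epsilon})^{\mathrm T}$ and $F_{\mathrm{eq}}$ for the equilibrium flux. Since $(\eta_{\mathrm{eq}},q_{\mathrm{eq}})$ is an entropy pair, $\nabla\eta_{\mathrm{eq}}\,DF_{\mathrm{eq}}=\nabla q_{\mathrm{eq}}$. By Theorem \ref{thm:existence}, $U^{\epsilon}\in L^{\infty}(0,T;H^{1})$ and $\partial_t U^{\epsilon}\in L^2(0,T;L^2)$, so the chain rule holds a.e. This gives
\[
\mathcal{D}^{\epsilon}=\nabla\eta_{\mathrm{eq}}(U^{\epsilon})\cdot\bigl(\partial_t U^{\epsilon}+\partial_x F_{\mathrm{eq}}(U^{\epsilon})\bigr)=-\epsilon\,\nabla\eta_{\mathrm{eq}}(U^{\epsilon})\cdot\bigl(\partial_t G^{\epsilon}+\partial_x H^{\epsilon}\bigr),
\]
where $G^{\epsilon}=(Q^{\epsilon},Q^{\epsilon}u^{\epsilon})^{\mathrm T}$ and $H^{\epsilon}=(Q^{\epsilon}u^{\epsilon},Q^{\epsilon}(u^{\epsilon})^2)^{\mathrm T}$. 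We then use the product rule, with $w^{\epsilon}:=\nabla\eta_{\mathrm{eq}}(U^{\epsilon})$, to split
\[
\mathcal{D}^{\epsilon}=-\epsilon\bigl[\partial_t(w^{\epsilon}\cdot G^{\epsilon})+\partial_x(w^{\epsilon}\cdot H^{\epsilon})\bigr]+\epsilon\bigl[\partial_t w^{\epsilon}\cdot G^{\epsilon}+\partial_x w^{\epsilon}\cdot H^{\epsilon}\bigr]=:\mathcal{D}^{\epsilon}_1+\mathcal{D}^{\epsilon}_2 .
\]

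For $\mathcal{D}^{\epsilon}_1$, Corollary \ref{cor:linf} bounds $p^{\epsilon},u^{\epsilon}$ in $L^\infty$, and $\rho_{\mathrm{eq}}$ is bounded away from vacuum. Hence $w^{\epsilon}$ is uniformly bounded on compacts. Lemma \ref{lem:error_estimate} gives $\|Q^{\epsilon}\|_{L^\infty(0,T;L^2)}\le C$. Therefore $w^{\epsilon}\cdot G^{\epsilon}$ and $w^{\epsilon}\cdot H^{\epsilon}$ are $O(1)$ in $L^2_{\mathrm{loc}}$, and $\mathcal{D}^{\epsilon}_1$ is the derivative of a field of size $O(\epsilon)$ in $L^2_{\mathrm{loc}}$. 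So $\|\mathcal{D}^{\epsilon}_1\|_{H^{-1}_{\mathrm{loc}}}\le C\epsilon\to0$, exactly as in Lemma \ref{lem:vanishing}.

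For $\mathcal{D}^{\epsilon}_2$, write $\partial w^{\epsilon}=D^2\eta_{\mathrm{eq}}(U^{\epsilon})\,\partial U^{\epsilon}$. By Lemma \ref{lem:gradient}, Corollary \ref{cor:ut} and Corollary \ref{cor:p}, $\partial_t w^{\epsilon}$ and $\partial_x w^{\epsilon}$ are uniformly bounded in $L^2((0,T)\times K)$. With $Q^{\epsilon}$ bounded in $L^2$, Cauchy--Schwarz gives $\|\mathcal{D}^{\epsilon}_2\|_{L^1((0,T)\times K)}\le C\epsilon$. This step can even be upgraded: Lemma \ref{lem:error_estimate} gives $\epsilon\|\partial_xR^{\epsilon}\|_{L^2}\le C\sqrt\epsilon$, so $\epsilon Q^{\epsilon}$ is small in $L^2(0,T;H^1)$ and $\epsilon Q^{\epsilon}\in L^\infty$ with norm $O(\epsilon^{3/4})$ via Gagliardo--Nirenberg. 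That yields an $L^2_{\mathrm{loc}}$ bound of order $\epsilon^{3/4}$, hence smallness in $H^{-1}_{\mathrm{loc}}$ directly. If only the $L^1$ bound is available, we instead invoke Murat's lemma. We note that $\mathcal{D}^{\epsilon}=\partial_t\eta_{\mathrm{eq}}+\partial_x q_{\mathrm{eq}}$ is bounded in $W^{-1,\infty}_{\mathrm{loc}}$, since $\eta_{\mathrm{eq}}(U^{\epsilon}),q_{\mathrm{eq}}(U^{\epsilon})$ are uniformly bounded by Corollary \ref{cor:linf}. A sequence compact in $W^{-1,q}_{\mathrm{loc}}$ for some $q<2$ (it lies in a bounded set of measures plus a compact set) and bounded in $W^{-1,r}_{\mathrm{loc}}$, $r>2$, is compact in $H^{-1}_{\mathrm{loc}}$. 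Combining the two pieces shows $\mathcal{D}^{\epsilon}$ is bounded and precompact in $H^{-1}_{\mathrm{loc}}$, and in fact $\mathcal{D}^{\epsilon}\to0$ there.

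The main obstacle is rigor in the chain rule and in the uniformity of $w^{\epsilon}$, $D^2\eta_{\mathrm{eq}}$. This needs $\rho_{\mathrm{eq}}(p^{\epsilon})$ bounded below uniformly in $\epsilon$, i.e.\ a uniform lower bound on $p^{\epsilon}$ (non-vacuum). The paper uses this implicitly; I would try to obtain it from the $H^1$ bound and the initial lower bound via the pressure equation. If that fails, I would restrict to entropies $\eta_{\mathrm{eq}}$ whose Hessian is bounded on the range of $U^{\epsilon}$. A second delicate point is the identity $\mathcal{D}^{\epsilon}=w^{\epsilon}\cdot(\cdots)$, since $\partial_tQ^{\epsilon}$ is not known to be in $L^2$. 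I would therefore define the product through the split above, which only requires $w^{\epsilon}\in H^1_{\mathrm{loc}}\cap L^\infty$ and $Q^{\epsilon}\in L^2$. The product-rule identity is justified by mollifying in $(t,x)$ and passing to the limit.
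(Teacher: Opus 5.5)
Your proposal is correct and follows the same route as the paper. Both arguments use the chain-rule expression for $\mathcal{D}^{\epsilon}$ coming from (5.2), split it into a divergence of fields of size $O(\epsilon)$ in $L^{2}$ plus a remainder $\epsilon(\partial_t w^{\epsilon}\cdot G^{\epsilon}+\partial_x w^{\epsilon}\cdot H^{\epsilon})$ that is small in $L^{1}_{\mathrm{loc}}$, and close with Murat's lemma. Your version is in fact cleaner than the paper's on three points. Your signs in the splitting are correct, whereas the paper's $F_1^{\epsilon},F_2^{\epsilon}$ carry the opposite sign, which would leave uncancelled $\partial_t Q^{\epsilon}$ terms in $\mathcal{D}_2^{\epsilon}$. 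You get $O(\epsilon)$ for the remainder directly by Cauchy--Schwarz, without the paper's detour through an $L^{\infty}$ bound on $Q^{\epsilon}$. And you include the $W^{-1,\infty}_{\mathrm{loc}}$ bound that Murat's lemma actually requires, which the paper's Step 5 omits.
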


\begin{proof}
We decompose \(\mathcal{D}^{\epsilon}\) into two parts:
\[
\mathcal{D}^{\epsilon} = \mathcal{D}_{1}^{\epsilon} + \mathcal{D}_{2}^{\epsilon},
\]
with \(\mathcal{D}_{1}^{\epsilon}\) compact in \(H_{\mathrm{loc}}^{-1}\) and \(\mathcal{D}_{2}^{\epsilon}\) compact in \(L_{\mathrm{loc}}^{1}\). Murat's lemma then implies that \(\mathcal{D}^{\epsilon}\) itself is compact in \(H_{\mathrm{loc}}^{-1}\).

\paragraph{Step 1: Explicit expression for \(\mathcal{D}^{\epsilon}\).} Using the chain rule and the perturbed system (5.2), we compute
\begin{multline}
\mathcal{D}^{\epsilon} = \frac{\partial\eta_{\mathrm{eq}}}{\partial(\rho_{\mathrm{eq}})}\bigl(\partial_{t}\rho_{\mathrm{eq}} + \partial_{x}(\rho_{\mathrm{eq}}u)\bigr) + \frac{\partial\eta_{\mathrm{eq}}}{\partial(\rho_{\mathrm{eq}}u)}\bigl(\partial_{t}(\rho_{\mathrm{eq}}u) + \partial_{x}(\rho_{\mathrm{eq}}u^{2} + p)\bigr) \\
= -\epsilon\frac{\partial\eta_{\mathrm{eq}}}{\partial(\rho_{\mathrm{eq}})}\bigl[\partial_{t}Q^{\epsilon} + \partial_{x}(Q^{\epsilon}u)\bigr] - \epsilon\frac{\partial\eta_{\mathrm{eq}}}{\partial(\rho_{\mathrm{eq}}u)}\bigl[\partial_{t}(Q^{\epsilon}u) + \partial_{x}(Q^{\epsilon}u^{2})\bigr]. \tag{5.6}
\end{multline}
The entropy variables \(\frac{\partial\eta_{\mathrm{eq}}}{\partial(\rho_{\mathrm{eq}})}\) and \(\frac{\partial\eta_{\mathrm{eq}}}{\partial(\rho_{\mathrm{eq}}u)}\) are smooth functions of \(U^{\epsilon}\); by Corollary \ref{cor:linf} they are uniformly bounded in \(L^{\infty}\).

\paragraph{Step 2: Decomposition.} Define the fluxes
\begin{equation}
F_{1}^{\epsilon} := \epsilon\frac{\partial\eta_{\mathrm{eq}}}{\partial(\rho_{\mathrm{eq}})}Q^{\epsilon} + \epsilon\frac{\partial\eta_{\mathrm{eq}}}{\partial(\rho_{\mathrm{eq}}u)}Q^{\epsilon}u, \qquad
F_{2}^{\epsilon} := \epsilon\frac{\partial\eta_{\mathrm{eq}}}{\partial(\rho_{\mathrm{eq}})}Q^{\epsilon}u + \epsilon\frac{\partial\eta_{\mathrm{eq}}}{\partial(\rho_{\mathrm{eq}}u)}Q^{\epsilon}u^{2}. \tag{5.7}
\end{equation}
Then
\begin{equation}
\mathcal{D}^{\epsilon} = \bigl(\partial_{t}F_{1}^{\epsilon} + \partial_{x}F_{2}^{\epsilon}\bigr) + \bigl(\mathcal{D}^{\epsilon} - \partial_{t}F_{1}^{\epsilon} - \partial_{x}F_{2}^{\epsilon}\bigr) =: \mathcal{D}_{1}^{\epsilon} + \mathcal{D}_{2}^{\epsilon}. \tag{5.8}
\end{equation}

\paragraph{Step 3: Compactness of \(\mathcal{D}_{1}^{\epsilon}\) in \(H_{\mathrm{loc}}^{-1}\).} By Lemma \ref{lem:error_estimate}, \(\|Q^{\epsilon}\|_{L^{2}} \leq C\). Moreover, since the entropy variables are bounded, we have
\[
\|F_{1}^{\epsilon}\|_{L^{2}} \leq C\epsilon\|Q^{\epsilon}\|_{L^{2}} \leq C\epsilon, \qquad
\|F_{2}^{\epsilon}\|_{L^{2}} \leq C\epsilon\|Q^{\epsilon}\|_{L^{2}} \leq C\epsilon.
\]
Thus \(F_{1}^{\epsilon}, F_{2}^{\epsilon} \to 0\) strongly in \(L^{2}\). Consequently, \(\partial_{t}F_{1}^{\epsilon} + \partial_{x}F_{2}^{\epsilon} \to 0\) strongly in \(H_{\mathrm{loc}}^{-1}\); in particular, \(\mathcal{D}_{1}^{\epsilon}\) is compact in \(H_{\mathrm{loc}}^{-1}\).

\paragraph{Step 4: Compactness of \(\mathcal{D}_{2}^{\epsilon}\) in \(L_{\mathrm{loc}}^{1}\).} Expanding \(\mathcal{D}_{2}^{\epsilon}\) using (5.6) and (5.7) yields terms of the form
\[
-\epsilon Q^{\epsilon}\partial_{t}\!\left(\frac{\partial\eta_{\mathrm{eq}}}{\partial(\rho_{\mathrm{eq}})}\right),\quad
-\epsilon Q^{\epsilon}u\,\partial_{x}\!\left(\frac{\partial\eta_{\mathrm{eq}}}{\partial(\rho_{\mathrm{eq}})}\right),\quad
-\epsilon Q^{\epsilon}u\,\partial_{t}\!\left(\frac{\partial\eta_{\mathrm{eq}}}{\partial(\rho_{\mathrm{eq}}u)}\right),\quad
-\epsilon Q^{\epsilon}u^{2}\partial_{x}\!\left(\frac{\partial\eta_{\mathrm{eq}}}{\partial(\rho_{\mathrm{eq}}u)}\right),
\]
together with possible cross terms. Since the entropy variables are smooth functions of \(U^{\epsilon}\), their derivatives \(\partial_{t}\) and \(\partial_{x}\) can be expressed in terms of \(\partial_{t}p, \partial_{x}p, \partial_{t}u, \partial_{x}u\). By Lemma \ref{lem:gradient}, Corollary \ref{cor:ut}, and Corollary \ref{cor:p} these derivatives are uniformly bounded in \(L^{2}\). For instance,
\[
\left\|\partial_{t}\!\left(\frac{\partial\eta_{\mathrm{eq}}}{\partial(\rho_{\mathrm{eq}})}\right)\right\|_{L^{2}} \leq C\bigl(\|\partial_{t}p\|_{L^{2}} + \|\partial_{t}u\|_{L^{2}}\bigr) \leq C.
\]
Therefore, using Hölder's inequality and the bounds on \(Q^{\epsilon}\) and \(u\),
\[
\|\epsilon Q^{\epsilon}\partial_{t}(\cdots)\|_{L^{1}} \leq \epsilon\|Q^{\epsilon}\|_{L^{2}}\|\partial_{t}(\cdots)\|_{L^{2}} \leq C\epsilon.
\]
For terms involving \(\partial_{x}(\cdots)\), we may need the \(L^{\infty}\) bound of \(Q^{\epsilon}\) (which is \(O(\epsilon^{-1/2})\) by Sobolev embedding) to obtain an \(O(\sqrt{\epsilon})\) estimate. Nevertheless, all contributions tend to zero as \(\epsilon\to 0\). Hence \(\|\mathcal{D}_{2}^{\epsilon}\|_{L^{1}} \leq C\sqrt{\epsilon} \to 0\), implying that \(\mathcal{D}_{2}^{\epsilon}\) converges strongly to zero in \(L_{\mathrm{loc}}^{1}\) and is therefore compact.

\paragraph{Step 5: Conclusion.} By Murat's lemma, the sum of a sequence compact in \(H_{\mathrm{loc}}^{-1}\) and a sequence compact in \(L_{\mathrm{loc}}^{1}\) is compact in \(H_{\mathrm{loc}}^{-1}\). Thus \(\mathcal{D}^{\epsilon}\) is compact in \(H_{\mathrm{loc}}^{-1}\), and a subsequence converges strongly to zero in \(H_{\mathrm{loc}}^{-1}\). This completes the proof.
\end{proof}

\subsection{Proof of Theorem 1.2 via Compensated Compactness}

We now provide an alternative proof of Theorem~1.2 using the compensated compactness framework established in Section~5.3. This approach does not rely on the higher regularity provided by Lemma~3.3 but rather exploits the compactness of the entropy dissipation measures obtained in Lemma~5.3.

\begin{proof}[Proof of Theorem~1.2]
Let $\{U^\epsilon\}_{\epsilon>0}$ be the family of solutions constructed in Theorem~1.1. Recall the perturbed equilibrium system (5.2) and the notation $U^\epsilon = (\rho_{\mathrm{eq}}(p^\epsilon),\rho_{\mathrm{eq}}(p^\epsilon)u^\epsilon)^\mathrm{T}$ for the equilibrium variables. From Corollary~3.1, we have the uniform bounds
\begin{equation}
\|p^\epsilon\|_{L^\infty((0,T)\times\mathbb{R})} + \|u^\epsilon\|_{L^\infty((0,T)\times\mathbb{R})} \leq C,
\end{equation}
which implies that $\{U^\epsilon\}$ is bounded in $L^\infty((0,T)\times\mathbb{R})$.

\noindent \textbf{Step 1. Compactness of entropy dissipation measures.}
Let $(\eta_{\mathrm{eq}}^1, q_{\mathrm{eq}}^1)$ be the restriction of the entropy pair (1.7)–(1.8) to the equilibrium manifold, i.e.,
\[
\eta_{\mathrm{eq}}^1(U) = \eta\big(\rho_{\mathrm{eq}}(p),\rho_{\mathrm{eq}}(p)u,\rho_g(p)\alpha_{\mathrm{eq}}(p)\big),\qquad
q_{\mathrm{eq}}^1(U) = q\big(\rho_{\mathrm{eq}}(p),\rho_{\mathrm{eq}}(p)u,\rho_g(p)\alpha_{\mathrm{eq}}(p)\big).
\]
This pair is strictly convex because $\eta$ is strictly convex. A second linearly independent strictly convex entropy pair $(\eta_{\mathrm{eq}}^2, q_{\mathrm{eq}}^2)$ can be constructed by taking
\[
\eta_{\mathrm{eq}}^2 = \eta_{\mathrm{eq}}^1 + C \rho_{\mathrm{eq}} u,\qquad q_{\mathrm{eq}}^2 = q_{\mathrm{eq}}^1 + C \big(\rho_{\mathrm{eq}} u^2 + p\big),
\]
with a suitable constant $C$ chosen so that $\eta_{\mathrm{eq}}^2$ remains strictly convex (see \cite[Section~3]{chen1994} for a general construction). Define the corresponding dissipation measures
\[
\mathcal{D}_k^\epsilon := \partial_t \eta_{\mathrm{eq}}^k(U^\epsilon) + \partial_x q_{\mathrm{eq}}^k(U^\epsilon), \qquad k=1,2.
\]
By Lemma~5.2, each $\mathcal{D}_k^\epsilon$ is uniformly bounded in $W^{-1,\infty}_{\mathrm{loc}}$ and, moreover, is compact in $H^{-1}_{\mathrm{loc}}((0,T)\times\mathbb{R})$. Hence, up to extraction of a subsequence (still denoted by $\epsilon$), we have
\[
\mathcal{D}_k^\epsilon \longrightarrow 0 \quad \text{strongly in } H^{-1}_{\mathrm{loc}}.
\]

\noindent \textbf{Step 2. Application of the div–curl lemma.}
Consider the two vector fields
\[
V_k^\epsilon := \big( \eta_{\mathrm{eq}}^k(U^\epsilon),\, q_{\mathrm{eq}}^k(U^\epsilon) \big), \qquad k=1,2.
\]
Because $\{U^\epsilon\}$ is bounded in $L^\infty$, the components of $V_k^\epsilon$ are bounded in $L^\infty$. Moreover, $\operatorname{div}_{t,x} V_k^\epsilon = \mathcal{D}_k^\epsilon$ is compact in $H^{-1}_{\mathrm{loc}}$. Therefore, the hypotheses of the div–curl lemma (see, e.g., \cite{Tartar1979}) are satisfied: for any test function $\phi\in C_c^\infty((0,T)\times\mathbb{R})$,
\[
\lim_{\epsilon\to0} \big\langle \eta_{\mathrm{eq}}^1(U^\epsilon) q_{\mathrm{eq}}^2(U^\epsilon) - \eta_{\mathrm{eq}}^2(U^\epsilon) q_{\mathrm{eq}}^1(U^\epsilon),\, \phi \big\rangle
= \big\langle \overline{\eta_{\mathrm{eq}}^1}\,\overline{q_{\mathrm{eq}}^2} - \overline{\eta_{\mathrm{eq}}^2}\,\overline{q_{\mathrm{eq}}^1},\, \phi \big\rangle,
\]
where the overline denotes weak-$*$ limits in $L^\infty$. By the theory of Young measures \cite{Tartar1979}, this identity implies that the Young measure $\nu_{t,x}$ associated with the sequence $\{U^\epsilon\}$ satisfies
\[
\big( \eta_{\mathrm{eq}}^1(\lambda) - \overline{\eta_{\mathrm{eq}}^1} \big) \big( q_{\mathrm{eq}}^2(\lambda) - \overline{q_{\mathrm{eq}}^2} \big)
- \big( \eta_{\mathrm{eq}}^2(\lambda) - \overline{\eta_{\mathrm{eq}}^2} \big) \big( q_{\mathrm{eq}}^1(\lambda) - \overline{q_{\mathrm{eq}}^1} \big) = 0
\quad \text{for } \nu_{t,x}\text{-a.e. }\lambda.
\]
Since the two entropy pairs are linearly independent and strictly convex, a standard argument (see \cite{chen1994}) forces $\nu_{t,x}$ to be a Dirac mass for almost every $(t,x)$. Consequently, the sequence $\{U^\epsilon\}$ converges strongly in $L^1_{\mathrm{loc}}((0,T)\times\mathbb{R})$ to a limit $U^0 = (\rho_{\mathrm{eq}}(p^0),\rho_{\mathrm{eq}}(p^0)u^0)^\mathrm{T}$.

\noindent \textbf{Step 3. Identification of the limit.}
Because $\alpha^\epsilon = \alpha_{\mathrm{eq}}(p^\epsilon) + \epsilon R^\epsilon$ and $\|R^\epsilon\|_{L^2}\le C$, the strong convergence of $p^\epsilon$ together with Lemma~4.2 yields
\[
\alpha^\epsilon \longrightarrow \alpha_{\mathrm{eq}}(p^0) \quad \text{strongly in } L^2_{\mathrm{loc}}.
\]
Inserting these convergences into the weak formulation of the perturbed system (5.2) and using Lemma~5.1, we find that $(p^0,u^0)$ satisfies the equilibrium Euler system (1.4)–(1.5) in the sense of distributions.

\noindent \textbf{Step 4. Entropy condition.}
Finally, we verify that $(p^0,u^0)$ is an entropy solution. For any convex entropy pair $(\eta_{\mathrm{eq}},q_{\mathrm{eq}})$ of the equilibrium system, the corresponding dissipation measure $\mathcal{D}^\epsilon$ is compact in $H^{-1}_{\mathrm{loc}}$ and converges to $0$ strongly. By the strong convergence of $U^\epsilon$, we have $\eta_{\mathrm{eq}}(U^\epsilon) \to \eta_{\mathrm{eq}}(U^0)$ and $q_{\mathrm{eq}}(U^\epsilon) \to q_{\mathrm{eq}}(U^0)$ in $L^1_{\mathrm{loc}}$. Hence, passing to the limit in the distributional inequality $\partial_t\eta_{\mathrm{eq}}(U^\epsilon)+\partial_x q_{\mathrm{eq}}(U^\epsilon) \le 0$ (which holds because $\eta_{\mathrm{eq}}$ is convex and the original system is dissipative) gives
\[
\partial_t\eta_{\mathrm{eq}}(U^0) + \partial_x q_{\mathrm{eq}}(U^0) \le 0
\]
in the sense of distributions. Thus $(p^0,u^0)$ is an entropy solution of the equilibrium Euler system, completing the proof.
\end{proof}

\begin{remark}[Comparison with the original proof]
\label{rem:strong_convergence}
The original proof of Theorem~1.2 in Section~5.2 relied on the $H^1$ uniform estimates (Lemma~3.3) and the Rellich–Kondrachov compactness theorem to obtain strong convergence directly. That approach is simpler and fully rigorous under the assumptions of Theorem~1.1. The compensated compactness argument presented here is an alternative that does not require $H^1$ regularity; it only needs the $L^\infty$ bounds and the compactness of entropy dissipation measures established in Lemma~5.2. Both methods are valid, and the choice between them depends on the regularity available and the intended generality of the result.
\end{remark}

\begin{remark}[Admissible solution class for compensated compactness]
\label{rem:admissible_CC}
In view of the proof above, one can define an admissible family of solutions for the relaxation limit within the compensated compactness framework as follows:

\begin{definition}[Admissible family for compensated compactness]
A family $\{U^\epsilon\}_{\epsilon>0}$ of solutions to (1.1)–(1.3) is called \emph{admissible for the compensated compactness approach} if there exists a constant $C>0$, independent of $\epsilon$, such that
\begin{enumerate}
\item[(i)] $\|U^\epsilon\|_{L^\infty}=\|(p^\epsilon, u^\epsilon, \alpha^\epsilon)\|_{L^\infty((0,T)\times\mathbb{R})} \le C$;
\item[(ii)] $\displaystyle\int_0^T\int_{\mathbb{R}} (\alpha^\epsilon - \alpha_{\mathrm{eq}}(p^\epsilon))^2 \,dx\,dt \le C\epsilon$;
\item[(iii)] For two linearly independent, strictly convex entropy pairs $(\eta_{\mathrm{eq}}^k,q_{\mathrm{eq}}^k)$ of the equilibrium Euler system, the corresponding dissipation measures $\partial_t\eta_{\mathrm{eq}}^k(U^\epsilon)+\partial_x q_{\mathrm{eq}}^k(U^\epsilon)$ are relatively compact in $H^{-1}_{\mathrm{loc}}((0,T)\times\mathbb{R})$.
\end{enumerate}
\end{definition}

Conditions (i)–(ii) are exactly those used in the original definition; condition (iii) replaces the higher-order regularity assumptions of Definition~1.1 and is the key ingredient for applying compensated compactness. In the present paper, Lemma~5.2 shows that the solutions constructed in Theorem~1.1 satisfy (iii) as a consequence of the uniform $H^1$ bounds and the error estimates. For more general initial data lacking $H^1$ regularity, one could still aim to construct solutions satisfying (iii) directly, for instance by a vanishing viscosity method combined with Murat’s lemma. The compensated compactness proof then yields the same relaxation limit.
\end{remark}

\section{Convergence Rate Estimate}
This part mainly provides the convergence rate estimate, i.e., give the proof of Theorem \ref{thm:rate}.
\begin{proof}
Let \(\eta_{\mathrm{eq}}(U)\) be a strictly convex entropy density for the equilibrium system, and let \(q_{\mathrm{eq}}(U)\) be the corresponding entropy flux. By strict convexity, there exist constants \(c_{1}, c_{2} > 0\) such that for any two states \(U, V\),
\begin{equation}\label{eq:relative_entropy_coercivity}
c_{2} |U - V|^{2} \geq \eta_{\mathrm{eq}}(U) - \eta_{\mathrm{eq}}(V) - \nabla \eta_{\mathrm{eq}}(V) \cdot (U - V) \geq c_{1} |U - V|^{2}. \tag{6.1}
\end{equation}
Define the relative entropy
\begin{equation}\label{eq:3}
\mathcal{E}^{\epsilon}(t)= \int_{\mathbb{R}} \Bigl( \eta_{\mathrm{eq}}(U^{\epsilon}(t,x)) - \eta_{\mathrm{eq}}(U^{0}(t,x)) - \nabla \eta_{\mathrm{eq}}(U^{0}(t,x)) \cdot (U^{\epsilon}(t,x) - U^{0}(t,x)) \Bigr) \, dx. \tag{6.2}
\end{equation}
Differentiating \eqref{eq:3} with respect to \(t\) yields
\begin{multline}
\frac{d}{dt} \mathcal{E}^{\epsilon}(t) = \int_{\mathbb{R}}\partial_{t} \eta_{\mathrm{eq}}(U^{\epsilon}) \, dx - \int_{\mathbb{R}}\partial_{t} \eta_{\mathrm{eq}}(U^{0}) \, dx \\
- \int_{\mathbb{R}}\nabla \eta_{\mathrm{eq}}(U^{0}) \cdot \partial_{t}(U^{\epsilon} - U^{0}) \, dx - \int_{\mathbb{R}}\bigl(\partial_{t} \nabla \eta_{\mathrm{eq}}(U^{0})\bigr) \cdot (U^{\epsilon} - U^{0}) \, dx. \tag{6.3}
\end{multline}

Using the perturbed system \eqref{eq:perturbed} and the weak form of the equilibrium system \eqref{eq:Euler1}\eqref{eq:Euler2}, after standard manipulations we obtain
\begin{equation}
\frac{d}{dt} \mathcal{E}^{\epsilon}(t) = \int_{\mathbb{R}}\mathcal{D}^{\epsilon} \, dx + \int_{\mathbb{R}}\bigl(\partial_{t} \eta_{\mathrm{eq}}(U^{0}) + \partial_{x} q_{\mathrm{eq}}(U^{0})\bigr) \, dx + \text{boundary terms} + \text{lower-order terms}, \tag{6.4}
\end{equation}
where \(\mathcal{D}^{\epsilon} = \partial_{t} \eta_{\mathrm{eq}}(U^{\epsilon}) + \partial_{x} q_{\mathrm{eq}}(U^{\epsilon})\) is the entropy dissipation measure of the perturbed system. Since \(U^{0}\) is an entropy solution, it satisfies the entropy inequality \(\partial_{t} \eta_{\mathrm{eq}}(U^{0}) + \partial_{x} q_{\mathrm{eq}}(U^{0}) \leq 0\) in the distributional sense; therefore the second integral is non-positive. Neglecting boundary terms (which vanish at infinity or under periodic boundary conditions) and noting that the lower-order terms can be controlled by \(\mathcal{E}^{\epsilon}\), we arrive at
\begin{equation}\label{eq:relative_entropy_inequality}
\frac{d}{dt} \mathcal{E}^{\epsilon}(t) \leq C \mathcal{E}^{\epsilon}(t) + \Bigl| \int_{\mathbb{R}}\mathcal{D}^{\epsilon} \, dx \Bigr|. \tag{6.5}
\end{equation}

To estimate \(\int_{\mathbb{R}}\mathcal{D}^{\epsilon} \, dx\), we integrate by parts, transferring the time and space derivatives onto the coefficients \(\frac{\partial \eta_{\mathrm{eq}}}{\partial (\rho_{\mathrm{eq}})}\) and \(\frac{\partial \eta_{\mathrm{eq}}}{\partial (\rho_{\mathrm{eq}} u)}\). For example,
\[
\int_{\mathbb{R}}\frac{\partial \eta_{\mathrm{eq}}}{\partial (\rho_{\mathrm{eq}})} \partial_{t} Q^{\epsilon} \, dx = -\int_{\mathbb{R}}\partial_{t}\!\left(\frac{\partial \eta_{\mathrm{eq}}}{\partial (\rho_{\mathrm{eq}})}\right) Q^{\epsilon} \, dx,
\]
\[
\int_{\mathbb{R}}\frac{\partial \eta_{\mathrm{eq}}}{\partial (\rho_{\mathrm{eq}})} \partial_{x}(Q^{\epsilon} u) \, dx = -\int_{\mathbb{R}}\partial_{x}\!\left(\frac{\partial \eta_{\mathrm{eq}}}{\partial (\rho_{\mathrm{eq}})}\right) Q^{\epsilon} u \, dx.
\]
By the a priori estimates established in Lemma \ref{lem:gradient} and Corollary \ref{cor:ut}, the coefficients \(\frac{\partial \eta_{\mathrm{eq}}}{\partial (\rho_{\mathrm{eq}})}\) and their derivatives are bounded in \(L^{2}\). Combining this with Corollary \ref{cor:p} and the uniform bound on \(u\) (Corollary \ref{cor:linf}), each term after integration by parts is bounded by \(C\|Q^{\epsilon}\|_{L^{2}} \leq C\). Multiplying by the explicit factor \(\epsilon\) from the definition of \(\mathcal{D}^{\epsilon}\) gives an \(O(\epsilon)\) estimate. More precisely, a detailed calculation shows that
\begin{equation} \label{eq:D_estimate}
\Bigl| \int_{\mathbb{R}}\mathcal{D}^{\epsilon} \, dx \Bigr| \leq C \epsilon. \tag{6.6}
\end{equation}
Substituting \eqref{eq:D_estimate} into \eqref{eq:relative_entropy_inequality} yields
\begin{equation}
\frac{d}{dt} \mathcal{E}^{\epsilon}(t) \leq C \mathcal{E}^{\epsilon}(t) + C \epsilon. \tag{6.7}
\end{equation}
Applying Gronwall's inequality, we obtain
\begin{equation}\label{eq:gronwall_result}
\mathcal{E}^{\epsilon}(t) \leq e^{Ct} \mathcal{E}^{\epsilon}(0) + C \epsilon (e^{Ct} - 1) \quad \text{for all } t \in [0,T]. \tag{6.8}
\end{equation}

The initial data satisfy \(U^{\epsilon}(0) = U_{0}^{\epsilon}\), and we choose the equilibrium initial data \(U^{0}(0)\) to be compatible with \(U_{0}\) (e.g., set \(p^{0}(0)=p_{0}\), \(u^{0}(0)=u_{0}\), and determine \(\rho_{\mathrm{eq}}(p_{0})\) and \(\Gamma_{\mathrm{eq}}(p_{0})\) from the equilibrium relations). From the error representation, \(\alpha_{0} = \alpha_{\mathrm{eq}}(p_{0}) + \epsilon R_{0}\) with \(\|R_{0}\|_{L^{2}} \leq C\). Consequently,
\[
\|U^{\epsilon}(0) - U^{0}(0)\|_{L^{2}} \leq C \epsilon.
\]
By the coercivity property \eqref{eq:relative_entropy_coercivity}, this implies \(\mathcal{E}^{\epsilon}(0) \leq C \epsilon^{2}\). Inserting this into \eqref{eq:gronwall_result} gives \(\mathcal{E}^{\epsilon}(t) \leq C \epsilon\) for all \(t\in[0,T]\).

Using \eqref{eq:relative_entropy_coercivity} again, we have
\[
\|U^{\epsilon}(t) - U^{0}(t)\|_{L^{2}}^{2} \leq C \mathcal{E}^{\epsilon}(t) \leq C \epsilon.
\]
In terms of the primitive variables \(p\) and \(u\), this yields
\[
\|p^{\epsilon}(t) - p^{0}(t)\|_{L^{2}}^{2} + \|u^{\epsilon}(t) - u^{0}(t)\|_{L^{2}}^{2} \leq C \epsilon.
\]
Integrating over \(t \in [0,T]\) and taking the square root (using the Cauchy–Schwarz inequality) finally gives
\[
\| p^{\epsilon} - p^{0}\|_{L^{2}(0,T;L^{2})} + \| u^{\epsilon} - u^{0}\|_{L^{2}(0,T;L^{2})} \leq C \epsilon^{1/2},
\]
which is exactly \eqref{eq:convergence_rate}. This completes the proof of Theorem \ref{thm:rate}.
\end{proof}

\appendix
\section{Appendix}

\subsection{Jacobian Matrix and Equivalence of Gradient Norms}

We establish the relationship between the primitive variables \(V = (p, u, \alpha)^{\mathrm{T}}\) and the conservative variables \(U = (\rho_{m}, m, \Gamma)^{\mathrm{T}}\), where
\[
\rho_{m} = \alpha \rho_{g}(p) + (1-\alpha)\rho_{l}, \qquad m = \rho_{m} u, \qquad \Gamma = \rho_{g}(p) \alpha,
\]
with \(\rho_{g}(p) = p/(RT_{0})\) and \(\rho_{l}\) constant. Define the mapping \(\Phi: V \mapsto U\). Its Jacobian matrix \(J = \frac{\partial U}{\partial V}\) is given by
\begin{equation}
J = \begin{pmatrix}
\dfrac{\partial \rho_{m}}{\partial p} & \dfrac{\partial \rho_{m}}{\partial u} & \dfrac{\partial \rho_{m}}{\partial \alpha} \\[6pt]
\dfrac{\partial m}{\partial p} & \dfrac{\partial m}{\partial u} & \dfrac{\partial m}{\partial \alpha} \\[6pt]
\dfrac{\partial \Gamma}{\partial p} & \dfrac{\partial \Gamma}{\partial u} & \dfrac{\partial \Gamma}{\partial \alpha}
\end{pmatrix}
= \begin{pmatrix}
\dfrac{\alpha}{RT_{0}} & 0 & \rho_{g}(p) - \rho_{l} \\[6pt]
u\dfrac{\alpha}{RT_{0}} & \rho_{m} & u(\rho_{g}(p) - \rho_{l}) \\[6pt]
\dfrac{\alpha}{RT_{0}} & 0 & \rho_{g}(p)
\end{pmatrix}. \tag{A.1}
\end{equation}

Under the physical assumptions \(\rho_{m} > 0\) and \(\alpha \in (0,1)\), one can verify that \(\det J \neq 0\); hence \(\Phi\) is a local diffeomorphism with a smooth inverse.

By the chain rule, spatial gradients satisfy \(\partial_{x}U = J \partial_{x}V\). Therefore, there exist constants \(C_{1}, C_{2} > 0\) (depending on the \(L^{\infty}\) bounds of \(p, u, \alpha\) but independent of \(\epsilon\)) such that
\begin{equation}
C_{1} \|\partial_{x}U\| \leq \|\partial_{x}V\| \leq C_{2} \|\partial_{x}U\|, \tag{A.2}
\end{equation}
where \(\|\cdot\|\) denotes the Euclidean norm. This equivalence is crucial in the proof of Lemma \ref{lem:gradient}: it allows us to work with the gradient of the conservative variables \(\|\partial_{x}U\|^{2}\) in the first-order entropy estimate and then deduce bounds on \(\|\partial_{x}p\|_{L^{2}} + \|\partial_{x}u\|_{L^{2}}\).

\subsection{Derivation of the Pressure Evolution Equation}

Starting from the mixture density \(\rho_{m} = \alpha \rho_{g}(p) + (1-\alpha)\rho_{l}\) with \(\rho_{g}(p) = p/(RT_{0})\) and \(\rho_{l}\) constant, we differentiate with respect to time:
\[
\partial_{t}\rho_{m} = \frac{\alpha}{RT_{0}}\partial_{t}p + \left(\frac{p}{RT_{0}} - \rho_{l}\right)\partial_{t}\alpha.
\]
A similar expression holds for the spatial derivative. Substituting these into the mixture mass conservation \(\partial_{t}\rho_{m} + \partial_{x}(\rho_{m}u) = 0\) and rearranging gives
\begin{equation} \label{eq:mixture_mass_material}
\frac{\alpha}{RT_{0}} D_{t}p + \left(\frac{p}{RT_{0}} - \rho_{l}\right) D_{t}\alpha + \rho_{m}\partial_{x}u = 0, \tag{A.3}
\end{equation}
where \(D_{t} = \partial_{t} + u\partial_{x}\) denotes the material derivative.

The gas mass equation \(\partial_{t}(\rho_{g}\alpha) + \partial_{x}(\rho_{g}\alpha u) = (\alpha_{\mathrm{eq}} - \alpha)/\epsilon\) expands to
\begin{equation}\label{eq:gas_mass_material}
\rho_{g} D_{t}\alpha + \frac{\alpha}{RT_{0}} D_{t}p + \rho_{g}\alpha \partial_{x}u = \frac{1}{\epsilon}(\alpha_{\mathrm{eq}} - \alpha). \tag{A.4}
\end{equation}
Solve \eqref{eq:gas_mass_material} for \(D_{t}\alpha\):
\[
D_{t}\alpha = \frac{1}{\rho_{g}} \left( \frac{1}{\epsilon}(\alpha_{\mathrm{eq}} - \alpha) - \frac{\alpha}{RT_{0}} D_{t}p - \rho_{g}\alpha \partial_{x}u \right).
\]
Insert this expression into \eqref{eq:mixture_mass_material} and use \(\rho_{g} = p/(RT_{0})\) and \(\rho_{m} = \alpha\rho_{g} + (1-\alpha)\rho_{l}\). After simplification, we obtain
\[
\frac{\alpha\rho_{l}}{p} D_{t}p = -\rho_{l}\partial_{x}u - \frac{1}{\epsilon}\frac{\rho_{g} - \rho_{l}}{\rho_{g}} (\alpha_{\mathrm{eq}} - \alpha).
\]
Dividing by \(\rho_{l}\) and noting that \(\frac{\rho_{g} - \rho_{l}}{\rho_{g}} = 1 - \frac{\rho_{l}RT_{0}}{p}\) yields
\[
\frac{\alpha}{p} D_{t}p = -\partial_{x}u - \frac{1}{\epsilon\rho_{l}}\left(1 - \frac{\rho_{l}RT_{0}}{p}\right)(\alpha_{\mathrm{eq}} - \alpha).
\]
Multiplying by \(p/\alpha\) and recalling that \(D_{t}p = \partial_{t}p + u\partial_{x}p\) finally gives the pressure evolution equation
\begin{equation}
\partial_{t}p = -u\partial_{x}p - \frac{p}{\alpha}\partial_{x}u - \frac{p}{\epsilon\alpha\rho_{l}}\left(1 - \frac{\rho_{l}RT_{0}}{p}\right)(\alpha_{\mathrm{eq}} - \alpha). \tag{A.5}
\end{equation}
\section*{Acknowledgments}
This work was supported by the National Natural Science Foundation of China under Grant No. 12271310 and Natural Science Foundation of Shandong Province under Grant No. ZR2022MA088.

\end{document}